\begin{document}
\newtheorem{theorem}{Theorem}
\newtheorem{lemma}[theorem]{Lemma}
\newtheorem{sublemma}{Sublemma}
\newtheorem{proposition}[theorem]{Proposition}
\newtheorem{corollary}[theorem]{Corollary}
\renewcommand{\thefootnote}{\fnsymbol{footnote}}

\newcommand{\myfbox}{\fbox}  \renewcommand{\myfbox}{\relax}

\title[Prime knots with arc index up to 11]%
{Prime knots with arc index up to 11   and an upper bound of arc index for non-alternating knots}

\author{Gyo Taek Jin and Wang Keun Park}

\address{Department of Mathematical Sciences,
Korea Advanced Institute of Science and Technology,
Daejeon, 305-701, Korea}
\email{trefoil@kaist.ac.kr, lg9004@hotmail.com}

\begin{abstract}
Every knot can be embedded in the union of finitely many half planes with a common boundary line in such a way that the portion of the knot in each half plane is a properly embedded arc. The minimal number of such half planes is called the arc index of the knot. We have identified all prime knots with arc index up to 11.
We also proved that the crossing number is an upperbound of arc index for non-alternating knots. As a result the arc index is determined for prime knots up to twelve crossings.
\end{abstract}

\keywords{knot, link, arc presentation, arc index, Cromwell matrix, grid diagram, knot-spoke diagram}

\subjclass[2000]{57M25, 57M27}

\maketitle

\section{Arc presentations and Arc index}
In his foundational work~\cite{C1995} on the \emph{arc index\/} of knots and links, Cromwell showed that every link diagram is isotopic to a diagram which is a finite union of the following local diagrams in such a way that no more than two corners exist in any vertical line and any horizontal line.
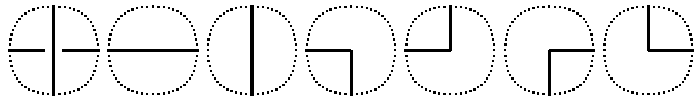
\begin{figure}[htb]
\setlength{\unitlength}{0.3mm}
\newcommand{\dotbox}{
\qbezier[15](20,0.5)(0.5,0.5)(0.5,20)
\qbezier[15](20,0.5)(39.5,0.5)(39.5,20)
\qbezier[15](20,39.5)(0.5,39.5)(0.5,20)
\qbezier[15](20,39.5)(39.5,39.5)(39.5,20) \thicklines }
\centering
\begin{picture}(40,50)(0,-5)
\dotbox \put(20,0){\line(0,1){40}} \put(0,20){\line(1,0){16}}
\put(24,20){\line(1,0){16}}
\end{picture}
\begin{picture}(40,50)(0,-5)
\dotbox \put(0,20){\line(1,0){40}}
\end{picture}
\begin{picture}(40,50)(0,-5)
\dotbox \put(20,0){\line(0,1){40}}
\end{picture}
\begin{picture}(40,50)(0,-5)
\dotbox \put(0,20){\line(1,0){20}}\put(20,20){\line(0,-1){20}}
\end{picture}
\begin{picture}(40,50)(0,-5)
\dotbox \put(20,40){\line(0,-1){20}}\put(20,20){\line(-1,0){20}}
\end{picture}
\begin{picture}(40,50)(0,-5)
\dotbox \put(40,20){\line(-1,0){20}}\put(20,20){\line(0,-1){20}}
\end{picture}
\begin{picture}(40,50)(0,-5)
\dotbox \put(20,40){\line(0,-1){20}}\put(20,20){\line(1,0){20}}
\end{picture}
\caption{Local parts of diagrams}
\end{figure}
Such a diagram is called a \emph{{grid diagram}}. %Two
%Cromwell diagrams are \emph{{equivalent}\/} if they are identical up
%to a finite sequence of local scalings of the {$x$}-axis and the
%{$y$}-axis.

\begin{figure}[htb]
\centering
\setlength{\unitlength}{0.4mm}
{
\begin{picture}(40,40)(-20,-20)
\thicklines
\qbezier(-3.449295881,19.56192902)(0,20.45206417)(3.449295881,19.56192902)
\qbezier(3.449295881,19.56192902)(6.831379890,18.68913875)(9.414213562,16.30589621)
\qbezier(9.414213562,16.30589621)(11.83980810,14.06774198)(13.04977866,10.95006445)
%\qbezier(13.04977866,10.95006445)(14.12479757,8.180110848)(14.06224055,5.118236976)
\qbezier(14.06224055,5.118236976)(14.01090856,2.605775951)(13.17157288,0.)
\qbezier(13.17157288,0.)(12.54933695,-1.931774688)(11.40438845,-4.150857928)
\qbezier(11.40438845,-4.150857928)(10.43699928,-6.025804354)(9.296942644,-7.801061140)
\qbezier(9.296942644,-7.801061140)(7.947634440,-9.902157238)(6.585786438,-11.40691672)
\qbezier(6.585786438,-11.40691672)(4.748786110,-13.43669072)(2.598597021,-14.73737604)
\qbezier(2.598597021,-14.73737604)(-0.021785052,-16.32248896)(-2.958144665,-16.77647206)
\qbezier(-2.958144665,-16.77647206)(-6.263117918,-17.28744558)(-9.414213562,-16.30589621)
\qbezier(-9.414213562,-16.30589621)(-12.76957900,-15.26071790)(-15.21647954,-12.76814236)
\qbezier(-15.21647954,-12.76814236)(-17.71200714,-10.22603206)(-18.66577542,-6.793786640)
\qbezier(-18.66577542,-6.793786640)(-19.60095887,-3.428420851)(-18.82842712,0.)
\qbezier(-18.82842712,0.)(-18.10292597,3.219703603)(-16.00792332,5.826407592)
%\qbezier(-16.00792332,5.826407592)(-14.14658258,8.142378093)(-11.46364353,9.619139052)
\qbezier(-11.46364353,9.619139052)(-9.262122459,10.83091476)(-6.585786438,11.40691672)
\qbezier(-6.585786438,11.40691672)(-4.601702523,11.83393194)(-2.107445805,11.95191908)
\qbezier(-2.107445805,11.95191908)(0,12.05160870)(2.107445805,11.95191908)
\qbezier(2.107445805,11.95191908)(4.601702528,11.83393194)(6.585786438,11.40691672)
\qbezier(6.585786438,11.40691672)(9.262122473,10.83091476)(11.46364353,9.619139052)
\qbezier(11.46364353,9.619139052)(14.14658260,8.142378075)(16.00792332,5.826407592)
\qbezier(16.00792332,5.826407592)(18.10292598,3.219703573)(18.82842712,0.)
\qbezier(18.82842712,0.)(19.60095888,-3.428420879)(18.66577542,-6.793786640)
\qbezier(18.66577542,-6.793786640)(17.71200713,-10.22603208)(15.21647954,-12.76814236)
\qbezier(15.21647954,-12.76814236)(12.76957898,-15.26071791)(9.414213562,-16.30589621)
\qbezier(9.414213562,-16.30589621)(6.263117879,-17.28744558)(2.958144665,-16.77647206)
%\qbezier(2.958144665,-16.77647206)(0.021785011,-16.32248895)(-2.598597021,-14.73737604)
\qbezier(-2.598597021,-14.73737604)(-4.748786134,-13.43669069)(-6.585786438,-11.40691672)
\qbezier(-6.585786438,-11.40691672)(-7.947634488,-9.902157183)(-9.296942644,-7.801061140)
\qbezier(-9.296942644,-7.801061140)(-10.43699932,-6.025804267)(-11.40438845,-4.150857928)
\qbezier(-11.40438845,-4.150857928)(-12.54933697,-1.931774655)(-13.17157288,0.)
\qbezier(-13.17157288,0.)(-14.01090857,2.605775996)(-14.06224055,5.118236976)
\qbezier(-14.06224055,5.118236976)(-14.12479757,8.180110881)(-13.04977866,10.95006445)
\qbezier(-13.04977866,10.95006445)(-11.83980808,14.06774200)(-9.414213562,16.30589621)
\qbezier(-9.414213562,16.30589621)(-6.831379864,18.68913877)(-3.449295881,19.56192902)
\end{picture}
} \qquad\qquad
{
\begin{picture}(40,40)
\thicklines \put(0,0){\line(1,0){30}} \put(30,0){\line(0,1){30}}
\put(30,30){\line(-1,0){7}} \put(17,30){\line(-1,0){7}}
\put(10,30){\line(0,-1){20}} \put(10,10){\line(1,0){17}}
\put(33,10){\line(1,0){7}} \put(40,10){\line(0,1){30}}
\put(40,40){\line(-1,0){20}} \put(20,40){\line(0,-1){20}}
\put(20,20){\line(-1,0){7}} \put(7,20){\line(-1,0){7}}
\put(0,20){\line(0,-1){20}}
\end{picture}
}
\caption{Grid diagram of a trefoil knot}
\end{figure}
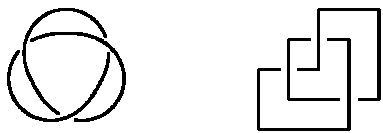

An {$n\times n$} matrix each of whose rows and columns has exactly two {1}'s and {0}'s elsewhere
is called a \emph{{Cromwell matrix}}. By joining two {1}'s in each column of a Cromwell matrix %{$M$}
with a vertical line segment and
two {1}'s in each row %of {$M$}
with a horizontal line segment which underpasses any vertical line segments that
it crosses, we obtain its grid diagram. % {$D_M$}.
Conversely, given a grid diagram with $n$ horizontal lines and $n$ vertical lines, we place 1's at each corner and 0's at other points where the lines and their extensions cross, to construct its Cromwell matrix.

\begin{figure}[htb]
\centering
\setlength{\unitlength}{0.11mm}
\myfbox{
\begin{picture}(240,240)(-20,-20)
\thicklines
\put(  0, 80){\line( 0,1){120}}
\put(  0,200){\line( 1,0){160}}
\put(  0, 80){\line( 1,0){ 30}}
\put( 40, 40){\line( 0,1){ 80}}
\put( 40,120){\line( 1,0){ 30}}
\put( 40, 40){\line( 1,0){ 70}}
\put( 80, 80){\line( 0,1){ 80}}
\put( 80,160){\line( 1,0){ 70}}
\put( 80, 80){\line(-1,0){ 30}}
\put(120,  0){\line( 0,1){120}}
\put(120,120){\line(-1,0){ 30}}
\put(120,  0){\line( 1,0){ 80}}
\put(160, 40){\line( 0,1){160}}
\put(160, 40){\line(-1,0){ 30}}
\put(200,  0){\line( 0,1){160}}
\put(200,160){\line(-1,0){ 30}}
\end{picture}
}
\qquad
\myfbox{
\begin{picture}(240,240)(-20,-20)
\qbezier[40](-20,  0)(100,  0)(220,  0)
\qbezier[40](-20, 40)(100, 40)(220, 40)
\qbezier[40](-20, 80)(100, 80)(220, 80)
\qbezier[40](-20,120)(100,120)(220,120)
\qbezier[40](-20,160)(100,160)(220,160)
\qbezier[40](-20,200)(100,200)(220,200)
\qbezier[40](  0,-20)(  0,100)(  0,220)
\qbezier[40]( 40,-20)( 40,100)( 40,220)
\qbezier[40]( 80,-20)( 80,100)( 80,220)
\qbezier[40](120,-20)(120,100)(120,220)
\qbezier[40](160,-20)(160,100)(160,220)
\qbezier[40](200,-20)(200,100)(200,220)
\scriptsize
{
\put( -15,204){$1$}\put( 25,204){$0$}\put( 65,204){$0$}\put( 105,204){$0$}\put( 145,204){$1$}\put( 185,204){$0$}
\put( -15,164){$0$}\put( 25,164){$0$}\put( 65,164){$1$}\put( 105,164){$0$}\put( 145,164){$0$}\put( 185,164){$1$}
\put( -15,124){$0$}\put( 25,124){$1$}\put( 65,124){$0$}\put( 105,124){$1$}\put( 145,124){$0$}\put( 185,124){$0$}
\put( -15, 84){$1$}\put( 25, 84){$0$}\put( 65, 84){$1$}\put( 105, 84){$0$}\put( 145, 84){$0$}\put( 185, 84){$0$}
\put( -15, 44){$0$}\put( 25, 44){$1$}\put( 65, 44){$0$}\put( 105, 44){$0$}\put( 145, 44){$1$}\put( 185, 44){$0$}
\put( -15,  4){$0$}\put( 25,  4){$0$}\put( 65,  4){$0$}\put( 105,  4){$1$}\put( 145,  4){$0$}\put( 185,  4){$1$}
}%
{
\thicklines
\put(  0, 80){\line( 0,1){120}}
\put(  0,200){\line( 1,0){160}}
\put(  0, 80){\line( 1,0){ 30}}
\put( 40, 40){\line( 0,1){ 80}}
\put( 40,120){\line( 1,0){ 30}}
\put( 40, 40){\line( 1,0){ 70}}
\put( 80, 80){\line( 0,1){ 80}}
\put( 80,160){\line( 1,0){ 70}}
\put( 80, 80){\line(-1,0){ 30}}
\put(120,  0){\line( 0,1){120}}
\put(120,120){\line(-1,0){ 30}}
\put(120,  0){\line( 1,0){ 80}}
\put(160, 40){\line( 0,1){160}}
\put(160, 40){\line(-1,0){ 30}}
\put(200,  0){\line( 0,1){160}}
\put(200,160){\line(-1,0){ 30}}
}
\end{picture}
}
\qquad
\myfbox{
\begin{picture}(240,240)(-20,-20)
\scriptsize
{
\put( -15,204){$1$}\put( 25,204){$0$}\put( 65,204){$0$}\put( 105,204){$0$}\put( 145,204){$1$}\put( 185,204){$0$}
\put( -15,164){$0$}\put( 25,164){$0$}\put( 65,164){$1$}\put( 105,164){$0$}\put( 145,164){$0$}\put( 185,164){$1$}
\put( -15,124){$0$}\put( 25,124){$1$}\put( 65,124){$0$}\put( 105,124){$1$}\put( 145,124){$0$}\put( 185,124){$0$}
\put( -15, 84){$1$}\put( 25, 84){$0$}\put( 65, 84){$1$}\put( 105, 84){$0$}\put( 145, 84){$0$}\put( 185, 84){$0$}
\put( -15, 44){$0$}\put( 25, 44){$1$}\put( 65, 44){$0$}\put( 105, 44){$0$}\put( 145, 44){$1$}\put( 185, 44){$0$}
\put( -15,  4){$0$}\put( 25,  4){$0$}\put( 65,  4){$0$}\put( 105,  4){$1$}\put( 145,  4){$0$}\put( 185,  4){$1$}
}%
\end{picture}
}
\caption{Construction of Cromwell matrix from a grid diagram and its inverse}
\end{figure}
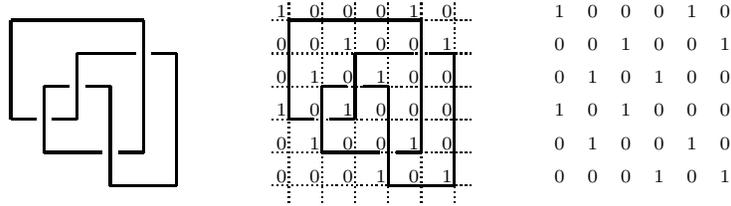

\begin{proposition}[Cromwell 1995]
Every link admits an arc presentation.
\end{proposition}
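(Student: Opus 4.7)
The plan is to reduce to the grid diagram theorem stated earlier in this section. Given a link $L$, I would first invoke that result to replace its projection by a grid diagram $G$ with $n$ columns and $n$ rows, equivalently an $n\times n$ Cromwell matrix $M$.

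From $G$ I would construct an arc presentation as follows. Let $B$ be a straight line in $\mathbb{R}^3$ and mark on it $n$ heights $h_1<\cdots<h_n$ corresponding to the rows of $G$. Attach to $B$ exactly $n$ half planes $H_1,\ldots,H_n$, arranged cyclically around $B$ in the same left-to-right order as the columns of $G$. In $H_i$ place a properly embedded arc $\alpha_i$ whose endpoints are the two heights $h_{j_1}$ and $h_{j_2}$ marked by the two $1$'s of column $i$ of $M$; a circular arc bulging into $H_i$ suffices. Because each row of $M$ also has exactly two $1$'s, precisely two arcs terminate at each marked height $h_j$, so the union $\alpha_1\cup\cdots\cup\alpha_n$ is a closed $1$-submanifold of $\mathbb{R}^3$, i.e.\ a link, contained in the union of the $n$ half planes.

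The step I expect to require the most care is verifying that this three-dimensional link is isotopic to $L$. I would argue by comparing diagrams: the vertical segments of $G$ correspond one-to-one to the arcs $\alpha_i$, while the horizontal segments of $G$ record the pairing of arc endpoints at the various heights on $B$. A crossing of $G$ occurs exactly when the column $i$ of some vertical segment lies strictly between the two columns $i_1,i_2$ joined by a horizontal segment at some row $j$, and $j$ lies strictly between the two rows joined by that vertical segment. This is precisely the configuration in which $H_i$ sits between $H_{i_1}$ and $H_{i_2}$ in the cyclic order around $B$ while the arc $\alpha_i$ is still present at height $h_j$. Projecting the arc presentation onto a generic plane, the strand along $\alpha_i$ then passes over the two strands joining $h_j$, matching the vertical-over-horizontal convention of the grid diagram. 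Hence the projection of the constructed arc presentation is precisely $G$ with the correct crossings, and so this spatial embedding represents $L$, as desired.
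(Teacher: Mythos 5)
Your construction is essentially the paper's proof run in reverse: the paper pulls the horizontal segments of a grid diagram backwards to a vertical axis behind the diagram, which yields exactly your correspondence (columns $\leftrightarrow$ pages, rows $\leftrightarrow$ binding points, the arc in page $i$ joining the two binding points named by the $1$'s in column $i$), so the approach is correct and the same. The one imprecision is in your verification step: if the half planes are genuinely spread cyclically around $B$, a generic projection does not uniformly place the intermediate page's arc in front, so you should instead fan the pages out within a half-space in column order (as in the paper's figure, where the axis sits behind the diagram); then the projection is the grid diagram with its vertical-over-horizontal convention and the identification with $L$ is immediate.
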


\begin{proof} As illustrated in Figure~\ref{fig:diagram2arc}, the horizontal line segments of a grid diagram are horizontally pulled backwards to touch a vertical axis behind the diagram to form an arc presentation. As every link has a grid diagram, we can conclude that every link has an arc presentation.
\end{proof}

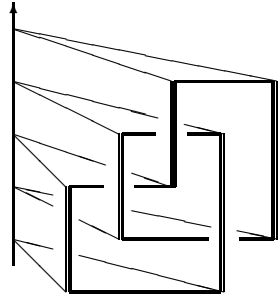
\begin{figure}[h]
\centering
\setlength{\unitlength}{0.7mm}
%{
\begin{picture}(50,55)(-10,0)
{
\put(-10,10){\line(1,-1){10}}
\put(-10,10){\line(4,-1){7.5}} \put(2.5,6.875){\line(4,-1){27.5}}
\put(-10,20){\line(2,-1){7.5}} \put(2.5,13.75){\line(2,-1){7.5}}
\put(-10,20){\line(5,-1){7.5}} \put(2.5,17.6){\line(5,-1){5}} \put(12.5,15.4){\line(5,-1){15}} \put(32.5,11.6){\line(5,-1){7.5}}
\put(-10,30){\line(1,-1){10}}
\put(-10,30){\line(3,-1){17.5}} \put(12.5,22.5){\line(3,-1){7.5}}
\put(-10,40){\line(2,-1){20}}
\put(-10,40){\line(4,-1){27.5}} \put(22.5,31.875){\line(4,-1){7.5}}
\put(-10,50){\line(5,-1){50}}
\put(-10,50){\line(3,-1){30}}
\put(30,0){\line(0,1){30}}
\put(10,30){\line(0,-1){20}}
\put(40,10){\line(0,1){30}}
\put(20,40){\line(0,-1){20}}
\put(0,20){\line(0,-1){20}}
}
\put(-10,5){\vector(0,1){50}}
\linethickness{0.3mm} %\thicklines
\put(0.7,0){\line(1,0){28.6}}
\put(29.3,30){\line(-1,0){6.3}}
\put(17,30){\line(-1,0){6.3}}
\put(10.7,10){\line(1,0){16.3}}
\put(33,10){\line(1,0){6.3}}
\put(39.3,40){\line(-1,0){18.6}}
\put(20.7,20){\line(-1,0){7.7}}
\put(7,20){\line(-1,0){6.3}}
\put(29.3,0){\line(0,1){30}}
\put(10.7,30){\line(0,-1){20}}
\put(39.3,10){\line(0,1){30}}
\put(20.7,40){\line(0,-1){20}}
\put(0.7,20){\line(0,-1){20}}
\end{picture}
%}
\caption{Construction of an arc presentation from a grid diagram}\label{fig:diagram2arc}
\end{figure}

The minimal number of pages of all arc presentations of a link {$L$} is called the \emph{{arc index}} of
{$L$} and is denoted by {$\alpha(L)$}.
It is known that the trefoil knot is the only link with arc index {5}.
The table below shows all links up to arc index {5}. In~\cite{N1999}, Nutt identified all knots up to arc index 9. In~\cite{B2002}, Beltrami determined arc index for prime knots up to ten crossings. In~\cite{Jin2006}, Jin et al.\ identified all prime knots up to arc index~10. Ng determined arc index for prime knots up to eleven crossings~\cite{Ng2006}. Matsuda determined the arc index for torus knots~\cite{H2006}. The \emph{Table of Knot Invariants\/}~\cite{knotinfo} gives the arc index for prime knots up to 12 crossings.

\begin{table}[htb]\small
\renewcommand\arraystretch{1.2}
\centering
\caption{Links with arc index up to 5} %\label{table:2to5}
{\begin{tabular}{|c|c|c|c|c|}
\hline
$\alpha(L)$ & 2& 3& 4& 5\\ \hline
$L$& unknot & none & 2-component unlink, Hopf link& trefoil\\ \hline
\end{tabular}}
\end{table}

\section{Tabulation of prime knots by arc index}
As there are finitely many {$n\times n$} Cromwell matrices for each {$n\ge2$}, there are finitely many knots with arc
index up to {$n$}, for each {$n\ge2$}, hence, there are only finitely many knots with arc index {$n$}, for each {$n\ge2$}.
By the following theorem of Cromwell, we only need to tabulate arc index of prime knots to determine arc index of all knots.
\begin{theorem}[Cromwell 1995]
If {$K_1$}, {$K_2$} are nontrivial, then
$$
{\alpha(K_1\sharp K_2)=\alpha(K_1)+\alpha(K_2)-2}
$$
\end{theorem}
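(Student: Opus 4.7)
The plan is to establish matching inequalities $\alpha(K_1\sharp K_2)\le \alpha(K_1)+\alpha(K_2)-2$ and $\alpha(K_1\sharp K_2)\ge \alpha(K_1)+\alpha(K_2)-2$, with the nontriviality hypothesis used only for the latter. For the upper bound I would give an explicit splicing construction. Take minimal arc presentations $\mathcal A_1$ of $K_1$ and $\mathcal A_2$ of $K_2$ and place them so they share a common binding axis. After translating $\mathcal A_2$ along the axis and rotating it about the axis, I may assume there is a single page $P$ containing an arc $a_1$ of $\mathcal A_1$ with axis endpoints $p,q$ and an arc $a_2$ of $\mathcal A_2$ with the same endpoints $p,q$, while all other pages used by the two presentations are pairwise distinct. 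Deleting $a_1$ and $a_2$ leaves one arc of $\mathcal A_1$ and one arc of $\mathcal A_2$ incident to each of $p$ and $q$; concatenating these two pairs at $p$ and at $q$ produces a single closed curve, which is $K_1\sharp K_2$ because a regular neighbourhood of $a_1\cup a_2$ serves as the connecting ball. This arc presentation uses $(\alpha(K_1)-1)+(\alpha(K_2)-1)$ pages.

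The lower bound, which I expect to be the main obstacle, requires a separating-sphere argument. Fix a minimal arc presentation $\mathcal A$ of $K_1\sharp K_2$ and a 2-sphere $S\subset S^3$ meeting $K_1\sharp K_2$ in exactly two points and realizing the summand decomposition. Put $S$ in general position with respect to the open book whose pages carry $\mathcal A$, and apply the standard innermost-disc and outermost-arc simplifications to $S\cap P$ for each page $P$ to reduce $S$ to a position in which it meets the binding axis in exactly two points and each page in a single proper arc. On each side of $S$, the arcs of $\mathcal A$ together with a single arc across $S$ that closes off the summand form an arc presentation of $K_i$. If $m_i$ is the number of arcs of $\mathcal A$ on the $K_i$-side of $S$, then $m_1+m_2=\alpha(K_1\sharp K_2)$ and $\alpha(K_i)\le m_i+1$, yielding the desired inequality.

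The crucial and subtle point is the reduction of $S$ to this standard position, and this is precisely where the nontriviality of $K_1$ and $K_2$ is used. If a summand were trivial, the corresponding ball-arc pair could be absorbed by isotopy into a single page, the $+1$ correction on that side would disappear, and the ``$-2$'' in the statement would fail. Verifying that the innermost-disc and outermost-arc surgeries can be arranged so that $S$ meets the axis in the minimal number of points, and that the procedure terminates at the claimed standard position without introducing new axis crossings, is where the real geometric work lies.
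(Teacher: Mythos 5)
First, a point of reference: the paper does not prove this statement at all; it is imported verbatim from Cromwell's 1995 paper and used as a black box, so there is no in-paper argument to measure your proposal against. Judged on its own terms, your proposal correctly splits the problem into the two inequalities, but it completes neither. For the upper bound, the splicing idea is the standard one, but the details as written do not work: translating and rotating $\mathcal A_2$ cannot literally make an arc $a_2$ share its axis endpoints with $a_1$ --- you must interleave the binding points of the two presentations --- and, more importantly, you must confine the pages of $\mathcal A_1\setminus a_1$ and of $\mathcal A_2\setminus a_2$ to two complementary angular sectors about the axis. It is the union of the two pages bounding those sectors (not a neighbourhood of $a_1\cup a_2$, which is a circle, not a ball) that provides the summing sphere meeting the spliced curve in exactly the two points near $p$ and $q$; if the pages of the two presentations are merely distinct but angularly interleaved, the spliced curve need not be $K_1\sharp K_2$. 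These defects are repairable, but they are precisely what a proof of the upper bound consists of.

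The lower bound is where the proposal genuinely fails: you have, by your own admission, deferred the entire content of the theorem. The assertion that a summing sphere can be isotoped, keeping $|S\cap K|=2$, into a position where it meets the binding axis in exactly two points and every page in a single proper arc is not a routine innermost-disc/outermost-arc cleanup. Those moves control intersections with individual pages but give no mechanism for reducing the number of points of $S\cap A$; outermost-arc compressions can be obstructed by the arcs of the presentation, and the standard way to break the impasse is a global Euler-characteristic count on the singular foliation that the pages induce on $S$ (this is how the analogous statements for split links and for composite closed braids are handled). Until that is carried out, the inequality $\alpha(K_1\sharp K_2)\ge\alpha(K_1)+\alpha(K_2)-2$ has not been established. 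A smaller but telling slip: your explanation of where nontriviality enters is wrong as stated. Since $\alpha(\mathrm{unknot})=2$, the formula $\alpha(K\sharp U)=\alpha(K)=\alpha(K)+\alpha(U)-2$ holds for a trivial summand; the hypothesis protects the counting argument (e.g.\ it guarantees each side of $S$ actually contains arcs of the presentation), not the truth of the identity, and conflating the two suggests the bookkeeping $\alpha(K_i)\le m_i+1$ has not been checked carefully.
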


The following moves on a Cromwell matrix does not change the link type of the corresponding grid diagram up to mirror images.
\begin{enumerate}
\item[M1.] Flipping in a horizontal axis, a vertical axis a diagonal axis and an antidiagonal axis.
\item[M2.] Rotation in the plane by 90 degrees.
\item[M3.] Moving the first row to the bottom and moving the first column to the rear.
\item[M4.] Exchange of two adjacent rows or columns whose ones are in non-interleaving position.
\end{enumerate}

\begin{figure}[h]
\centering
$\small
\begin{bmatrix}
&&&\cdots&&&\\
\bf1& 0& 0& 0& 0& \bf1& 0\\
0& \bf1& 0& 0& \bf1& 0& 0\\
&&&\cdots&&&
\end{bmatrix}
\Longleftrightarrow
\begin{bmatrix}
&&&\cdots&&&\\
0& \bf1& 0& 0& \bf1& 0& 0\\
\bf1& 0& 0& 0& 0& \bf1& 0\\
&&&\cdots&&&
\end{bmatrix}
$
\caption{Interchange of non-interleaved rows}\label{fig:non-interleaved rows}
\end{figure}

\begin{figure}[h]
\centering
\setlength{\unitlength}{0.0105cm}
\begin{picture}(240,260)(-20,-20)
%\unwrap
%
\thicklines{
\put(  0, 80){\line(0,1){120}} \put(  0,200){\line( 1,0){160}} \put(  0, 80){\line( 1,0){ 30}}
\put( 40, 40){\line(0,1){ 80}} \put( 40,120){\line( 1,0){ 30}} \put( 40, 40){\line( 1,0){ 70}}
\put( 80, 80){\line(0,1){ 80}} \put( 80,160){\line( 1,0){ 70}} \put( 80, 80){\line(-1,0){ 30}}
\put(120,  0){\line(0,1){120}} \put(120,120){\line(-1,0){ 30}} \put(120,  0){\line( 1,0){ 80}}
\put(160, 40){\line(0,1){160}}                                 \put(160, 40){\line(-1,0){ 30}}
\put(200,  0){\line(0,1){160}} \put(200,160){\line(-1,0){ 30}}
}
\end{picture}
\qquad
\raise32pt\hbox{
${\small
\begin{bmatrix}
\ 1&\ 0&\ 0&\ 0&\ 1&\ 0\ \\
\ 0&\ 0&\ 1&\ 0&\ 0&\ 1\ \\
\ 0&\ 1&\ 0&\ 1&\ 0&\ 0\ \\
\ 1&\ 0&\ 1&\ 0&\ 0&\ 0\ \\
\ 0&\ 1&\ 0&\ 0&\ 1&\ 0\ \\
\ 0&\ 0&\ 0&\ 1&\ 0&\ 1\
\end{bmatrix}
}
$
}
\caption{$100010\ 001001\ 010100\ 101000\ 010010\ 000101_2$}\label{fig:norm}
\end{figure}
The \emph{{norm}} of a Cromwell matrix is the {$n^2$} digit binary number obtained by concatenating its rows.
An example is shown in Figure~\ref{fig:norm}.

To tabulate prime knots with arc index up to 11, we proceeded with the following steps, for each integer $n=5,\ldots,11$.
\begin{enumerate}
\item Generate all {$n\times n$} Cromwell matrices whose leading entry is 1 in the norm-decreasing order.
\item Discard those corresponding to links of more than one components.
\item Discard if its grid diagram is not prime.
\item Discard if a sequence of moves M1--M4 ever increase the norm.
\item Discard if a sequence of move M4 ever makes two 1's adjacent horizontally or vertically, as their existence causes a reduction of the size of Cromwell matrix.
\item Identify the knot of its grid diagram.
\item Discard the knot if it already appeared for a smaller {$n$}.
\end{enumerate}

Our computer program in which the steps (1)--(5) were implemented produced 663,341 Cromwell matrices for $n=11$ and their Dowker-Thistlethwaite codes (`DT codes' for short). Using Knotscape~\cite{knotscape}, we were able to eliminate most of the duplications and obtained 2,721 distinct DT codes of prime knots. They include
\begin{itemize}
\item All prime knots of arc index 11. There are 2,335 such knots.
\item All prime knots up to arc index 10 except $13n_{4639}$. There are 281 such knots~\cite{Jin2006}.
\item All prime knots up to 10 crossings except the 10 crossing alternating knots. There are 126 such knots.
\item Some duplications on knots with more than 16 crossings.
\end{itemize}

\begin{table}[t]
\small
\caption{Prime knots up to arc index 11 or up to 12 crossings}
{
\newcommand{\vsp}{\vrule width0pt height 10pt depth.5pt} % \renewcommand{\vsp}{}
\newcommand{\unk}{$*$} %$\star$}
%{
\begin{tabular}{|c|c|c|c|c|c|c|c|c|c|c|c|}
\hline
%\vrule width0pt height 15pt depth30pt
%\lower25pt\hbox{Crossing number} \kern-25pt Arc index &
%\renewcommand{\vsp}{\vrule width0pt height 8pt depth2pt}
\vrule width0pt height 12pt depth22pt
\lower15pt\hbox{Crossings} \kern-25pt Arc index &
        \hbox to 7pt{\hfil 5\hfil}&
        \hbox to 7pt{\hfil 6\hfil}&
        \hbox to 7pt{\hfil 7\hfil}&
        \hbox to 7pt{\hfil 8\hfil}&
        \hbox to 7pt{\hfil 9\hfil}&
        \hbox to 7pt{\hfil {10}\hfil}&
        \hbox to 7pt{\hfil {11}\hfil}&
        \hbox to 7pt{\hfil {12}\hfil}&
        \hbox to 7pt{\hfil {13}\hfil}&
        \hbox to 7pt{\hfil {14}\hfil}& Subtotal\\
\hline
\vsp  3&1\footnotemark[1]& & & & &  &  &  &  &  &  1\\
\hline
\vsp  4& &1\footnotemark[1]& & & &  &  &  &  &  &  1\\
\hline
\vsp  5& & &2\footnotemark[1]& & &  &  &  &  &  &  2\\
\hline
\vsp  6& & & &3\footnotemark[1]& &  &  &  &  &  &  3\\
\hline
\vsp  7& & & & &7\footnotemark[1]&  &  &  &  &  &  7\\
\hline
\vsp  8& & &1&2& &{18}\footnotemark[1]&  &  &  &  &21\\
\hline
\vsp  9& & & &2&6&  &{41}\footnotemark[1]&  &  &  & 49\\
\hline
\vsp 10& & & &1&9&32&  &  123\footnotemark[1]&  &  &  165\\
\hline
\vsp 11& & & & &4&46&135&  & 367\footnotemark[1]&  & 552\\
\hline
\vsp 12& & & & &2&48& \bf 211\footnotemark[2]&  \bf 627&  &1288\footnotemark[1]& 2176\\
\hline
\vsp 13& & & & & &49& \bf 399  \\
\cline{1-8}
\vsp 14& & & & & &17& \bf 477  \\
\cline{1-8}
\vsp 15& & & & &1&22& \bf 441  \\
\cline{1-8}
\vsp 16& & & & & & 7& \bf 345  \\
\cline{1-8}
\vsp 17& & & & & & 1& \bf 192  \\
\cline{1-8}
\vsp 18& & & & & &  & \bf 75\footnotemark[3]  \\
\cline{1-8}
\vsp 19& & & & & &  & \bf 12\footnotemark[3]  \\
\cline{1-8}
\vsp 20& & & & & &  & \bf 3\footnotemark[3]  \\
\cline{1-8}
\vsp 21& & & & & &  & \bf 3\footnotemark[3]  \\
\cline{1-8}
\vsp 22& & & & & &  &    \\
\cline{1-8}
\vsp 23& & & & & &  &    \\
\cline{1-8}
\vsp 24& & & & & &  & \bf 1 \\
\cline{1-8}
\cline{1-8}
\vsp Subtotal& 1& 1& 3& 8& {29}& {240}& \bf {2335}\\
\cline{1-8}
\end{tabular}\\
%}
}
\end{table}

\footnotetext[1]{The number of prime alternating knots with the given crossing number.}
\footnotetext[2]{The numbers in boldface are newly determined by this work.}
\footnotetext[3]{Some of these knots may have smaller crossing number but not smaller than $17$.}

To handle the duplications on knots with more than 16 crossings, we used polynomial invariants and hyperbolic invariants built into Knotscape. We also used Knotplot~\cite{knotplot} to isotope a knot for another DT code that may work better in Knotscape. Alexander Stoimenow strained out all the duplicates which we could not handle. The knots of arc index 11 counted in Table~2 are all distinct although the crossing number may change for those of 18 crossings or higher.
The authors wrote a separate article for the list of all prime knots with arc index up to 11 and their minimal arc presentations~\cite{Jin2007}.

%To handle the duplications on knots with 17 crossings or higher, we used polynomial invariants and hyperbolic invariants built into Knotscape. We also used Knotplot~\cite{knotplot} to isotope a knot for another DT code that may work better in Knotscape. Both methods were so effective that the knots of arc index 11 counted in Table~2 are all distinct although the crossing number may change for those of 18 crossings or higher. Appendix I is the list of prime knots up to arc index 11 and minimal crossing number 16. Appendix II is the list of primes knots up to arc index 11 whose minimal crossing number is bigger than 16. As the prime knots with more than 16 crossings have no official names, they are presented by the minimal grid diagrams and Dowker-Thistlethwaite codes.

\section{Alternating knots and non-alternating knots}

Combining Theorem~\ref{Bae-Park, 2000} and Theorem~\ref{Morton-Beltrami, 1998}, we know that the arc index of alternating links is equal to the minimal crossing number plus 2. The main diagonal of Table~2 shows this fact. The blanks below the main diagonal of Table~2 verifies that the inequality in Theorem~\ref{thm:main} holds up to 12 crossing knots.
Theorem~\ref{Bae-Park, 2000} was conjectured by Cromwell and Nutt~\cite{CN1996}. Beltrami obtained an inequality sharper than the one in Theorem~\ref{thm:main} for semi-alternating links~\cite{B2002}.

\begin{theorem}[Bae-Park]\label{Bae-Park, 2000}
Let $L$ be any prime link and let $c(L)$ denote the minimal crossing number of $L$. Then
$\alpha(L)\le c(L)+2$.
\end{theorem}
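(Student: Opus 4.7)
The plan is to start with a minimal crossing diagram $D$ of the prime link $L$ and construct directly from $D$ an arc presentation with at most $c(L)+2$ pages. The target count is extremely suggestive of Euler's formula: the $4$-valent planar shadow underlying $D$ has $c(L)$ vertices and $2c(L)$ edges, so by $V-E+F=2$ it has exactly $c(L)+2$ faces. The natural goal is therefore to put the pages of the arc presentation in bijection with the faces of $D$.

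The construction I would attempt proceeds in three steps. First, put $D$ into a knot-spoke configuration (as hinted by the paper's keywords): choose one face $F_0$ as a distinguished outer face, place a hub vertex in its interior, and draw a non-crossing spoke from the hub into each of the remaining $c(L)+1$ faces, meeting the edges of $D$ transversely at single points. This cuts the underlying circle of $L$ into $c(L)+2$ combinatorial arcs, one arc associated to each face. Second, isotope the diagram so that all the spokes sweep to a common vertical axis placed behind the plane of $D$, each spoke attaching at a distinct height; the $c(L)+2$ arcs become the horizontal pages of an arc presentation whose binding axis is this vertical line. Third, check that the resulting grid diagram, obtained in the manner of Figure~\ref{fig:diagram2arc}, recovers $L$.

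The main obstacle is compatibility with crossing information. Each crossing of $D$ involves an overstrand in one arc passing above an understrand in another, and this imposes a partial order on the $c(L)+2$ arcs telling which must lie higher on the axis. If this partial order contains a cycle, no linear extension of heights exists and the construction collapses. Showing it is acyclic is where primeness enters: a cycle among arcs would localize to a closed curve in the diagram crossed an even number of times by the knot and bounding a nontrivial tangle on each side, which is precisely the kind of configuration ruled out by assuming $D$ is a prime minimal diagram. I would prove this by induction on $c(L)$, with the small cases ($c(L)\le 3$) checked directly against Table~1, and the inductive step arguing that any obstructing cycle would allow one to split off a smaller tangle and contradict primeness. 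A secondary concern, easily absorbed, is the freedom in the choice of outer face $F_0$: if an initial choice forces an obstructing cycle, switching to an adjacent face (a move that does not change $c(L)$) can be used to resolve it.
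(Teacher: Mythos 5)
Your instinct that the bound $c(L)+2$ counts the regions of a minimal diagram, and that the arcs of the final presentation should correspond to those regions, is sound: in the proof of \cite{BP2000} that this paper recapitulates in Section~4, the diagram is converted into a wheel diagram whose spokes are created one per destroyed region, and the conserved quantity $s(\,\cdot\,)+r(\,\cdot\,)=r(D)$ is exactly what yields $c(D)+2$ spokes at the end. But your construction has two genuine gaps. First, the static setup is not realizable: a spoke running from a hub in $F_0$ to another face can meet $D$ in a single point only if that face is adjacent to $F_0$, and in general most faces are not (already in the standard trefoil diagram the central triangle is not adjacent to the outer face), so ``one spoke per face, each crossing $D$ once'' does not produce $c(L)+2$ cut points on the link, nor any controlled number of them. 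Second, and more seriously, the crossing-compatibility step is the entire content of the theorem and you have not supplied it. The partial order you describe on the arcs can certainly contain directed cycles for a badly chosen decomposition of a prime diagram (three arcs each passing over the next is easy to arrange), so acyclicity is not a consequence of primeness; it is a property of a carefully constructed decomposition, and producing that decomposition is the hard part. The proposed remedies --- induction on $c(L)$ with ``split off a smaller tangle,'' or switching the outer face --- are declarations of intent rather than arguments.

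The route taken in \cite{BP2000}, as summarized in Section~4, sidesteps the acyclicity problem entirely by building the decomposition dynamically: edges are contracted into the distinguished vertex one at a time, Lemma~\ref{lem:key-BP2000} (adapted here as Lemma~\ref{lem:adapt-key-BP2000}) guarantees that at every stage some contraction preserves the cut-point-free property, and heights are assigned greedily at the moment of contraction --- the understrand is pushed below all existing levels and the overstrand above all of them --- so no global consistency condition ever needs to be checked. To salvage a static ``one arc per region'' proof you would essentially have to reprove that lemma and re-derive the ordering it produces, at which point you have reconstructed the Bae--Park argument.
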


\begin{theorem}[Morton-Beltrami]\label{Morton-Beltrami, 1998}
Let $L$ be an alternating link with the minimal crossing number $c(L)$, then $\alpha(L)\ge c(L)+2$.
\end{theorem}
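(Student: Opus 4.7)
The plan is to combine two polynomial-invariant inequalities. The first, due to Morton, gives a lower bound on arc index in terms of the $v$-spread of the HOMFLY polynomial $P_L(v,z)$: for every link $L$,
$$\alpha(L)\ge E_v(P_L) + 2,$$
where $E_v$ denotes the breadth in the variable $v$. I would establish this by taking any arc presentation of $L$ with $n$ pages, regarding the binding axis as the braid axis of an open book decomposition of $S^3$ so that $L$ becomes a closed braid on $n$ strands, and applying the HOMFLY skein relation at each crossing of an associated grid diagram. A careful bookkeeping of $v$-degrees then shows that $P_L$ lives in a band of $v$-degrees of width at most $n-2$, so $E_v(P_L)\le n-2$; minimizing over $n$ gives the inequality.

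The second ingredient is that for a prime alternating link $L$ one has $E_v(P_L) \ge c(L)$. Starting from a reduced alternating diagram $D$ realizing the minimal crossing number, I would expand $P_L$ as a state sum over the $A$- and $B$-smoothings of $D$. Parallel to the Kauffman-bracket proof of the Tait span conjecture, the extremal monomials in $v$ are realized uniquely by the all-$A$ and all-$B$ states, so no cancellation among extremal terms is possible, and the $v$-spread is forced to equal $c(D)$. By the resolved Tait crossing-number conjecture (Kauffman, Murasugi, Thistlethwaite), $c(D)=c(L)$. Combined with Morton's inequality this yields $\alpha(L)\ge c(L)+2$.

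The main obstacle will be the extremal-state analysis inside the HOMFLY state sum: unlike the Kauffman bracket, HOMFLY states carry extra orientation data, so the ``unique extremal state'' lemma requires additional bookkeeping to rule out accidental cancellation between states of different orientation weight. A cleaner route, which Beltrami in effect follows, is to work throughout with the Kauffman bracket: prove the analogous Morton-style bound $\alpha(L)\ge \tfrac{1}{4}\mathrm{span}\,\langle D\rangle + 2$ by the same open-book/skein argument, and then invoke the classical Kauffman-Murasugi-Thistlethwaite span theorem directly to convert the bracket span into $c(L)$. This sidesteps the HOMFLY complications while giving the same bound.
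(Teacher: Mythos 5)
The paper does not prove this statement at all: it is quoted from Morton and Beltrami's article \cite{MB1998} and used as a black box, so your argument has to stand on its own. It does not, because your second ingredient is false. The claim that $E_v(P_L)\ge c(L)$ for a prime alternating link contradicts even the trefoil, whose HOMFLY polynomial $-v^4+2v^2+v^2z^2$ has $v$-spread $2<3=c$; more generally the $(2,n)$ torus links have $E_v(P_L)=2$ against $c(L)=n$. The reason is structural: by the Morton--Franks--Williams inequality the $v$-spread satisfies $E_v(P_L)\le 2b(L)-2$, where $b(L)$ is the braid index, and alternating links routinely have braid index far below their crossing number. The analogy with the Kauffman-bracket proof of the Tait span conjecture does not transfer to HOMFLY, because in the bracket the $A$-grading carries the crossing data, whereas in $P_L(v,z)$ the crossing data lives in the $z$-grading and the $v$-grading records writhe and Seifert-circle data. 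No amount of extremal-state bookkeeping will make the $v$-spread reach $c(L)$, so the route through the HOMFLY polynomial cannot produce the bound $c(L)+2$.

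The actual Morton--Beltrami argument uses the two-variable Kauffman polynomial $F_L(a,z)$ instead: they prove $\alpha(L)\ge \mathrm{spr}_a F_L+2$ by an open-book skein argument of the general shape you sketch, and then invoke Thistlethwaite's adequacy theorem, which gives $\mathrm{spr}_a F_L=c(L)$ for a reduced alternating (indeed any adequate) diagram. Your proposed fallback to the one-variable Kauffman bracket does not repair the proof either: the bracket is the specialization of $F$ under which the $a$- and $z$-gradings collapse, its span computes the Jones polynomial span, and the inequality $\alpha(L)\ge\tfrac14\,\mathrm{span}\langle D\rangle+2$ is neither a known theorem nor something your skein argument establishes. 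The grading that makes the theorem work is the $a$-degree of $F_L(a,z)$, and it survives neither the passage to HOMFLY nor the bracket specialization; any correct proof along these lines must work with the two-variable Kauffman polynomial itself.
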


\begin{theorem}\label{thm:main}
A prime link {$L$} is non-alternating if and only if $$\alpha(L)\le c(L).$$
\end{theorem}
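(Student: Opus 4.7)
The backward implication is immediate from Theorem~\ref{Morton-Beltrami, 1998}. If $L$ were alternating, one would have $\alpha(L) \ge c(L) + 2 > c(L)$, contradicting the hypothesis $\alpha(L) \le c(L)$. So $\alpha(L) \le c(L)$ forces $L$ to be non-alternating.

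For the forward implication, suppose $L$ is prime and non-alternating; the plan is to build an arc presentation on at most $c(L)$ pages, thereby sharpening Theorem~\ref{Bae-Park, 2000} by exactly $2$ in the non-alternating case. First I would fix any minimal crossing diagram $D$ of $L$ with $c = c(L)$ crossings. Since $L$ admits no alternating diagram at all, $D$ itself is non-alternating, so it contains an edge whose two endpoints are both over-crossings; a short parity count of over/under endpoints across the crossings then shows there is also an edge whose two endpoints are both under-crossings.

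Next I would revisit the Bae--Park construction, which turns $D$ into a \emph{knot-spoke diagram} (see the keywords of this paper) carrying $c+2$ spokes, whose collapse gives an arc presentation on $c+2$ pages. At each of the two non-alternating edges identified above, the aim is to show that two adjacent spokes can be \emph{merged} into a single spoke: two consecutive over-crossings (respectively under-crossings) let the corresponding horizontal segments of the associated grid diagram be combined into one, and primeness of $L$ rules out local obstructions. Performing the merge independently at the over--over edge and at the under--under edge removes two spokes, yielding an arc presentation on $c$ pages and hence $\alpha(L) \le c(L)$.

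The hard part will be justifying the spoke-merging step. One must verify that the two spokes adjacent to a non-alternating edge are geometrically compatible, that the merged knot-spoke diagram still represents $L$, and that the resulting Cromwell matrix still has exactly two $1$'s in each row and column. Primeness is presumably what prevents a nugatory crossing near a non-alternating edge from obstructing the merge. A careful case analysis of the local structure at the over--over and under--under edges, together with a compatibility argument ensuring that the two merges can be carried out simultaneously (rather than interfering with one another), is what I expect to do most of the real work.
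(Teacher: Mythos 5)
Your backward implication is exactly the paper's: by Theorem~\ref{Morton-Beltrami, 1998} an alternating link satisfies $\alpha(L)\ge c(L)+2$, so $\alpha(L)\le c(L)$ forces $L$ to be non-alternating. That half is complete.

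The forward implication, however, is only a plan, and the plan defers precisely the step where all of the work lies. Two concrete gaps. First, the existence of an over--over edge and an under--under edge (your parity count is correct; the two counts are always equal) does not by itself permit a ``spoke merge.'' Whether a saving can be realized at a given non-alternating edge depends on the configuration of the neighbouring regions, and the paper must first prove a structure result: examining the subgraph $D^\star$ of the dual graph spanned by the non-alternating edges, it shows that every prime minimal non-alternating diagram exhibits one of three specific local patterns (an arc with three consecutive overcrossings; a non-alternating edge whose transversal edges at one end are both alternating; or one alternating and one non-alternating), and the two savings are extracted differently in each case --- in Case~I both savings come from a single isotopy of one arc over $v_0$, not from two independent merges at an over--over edge and an under--under edge as you propose. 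Second, and more seriously, each saving is performed only after a long sequence of edge contractions that make the relevant regions triangular, and these contractions can create cut-points, which would break the construction. Proposition~\ref{prop:main tool} exists precisely to show that whenever a contraction creates a cut-point the sequence can be rerouted (collapsing everything on one side of a separating curve $S_e$ into spokes), and Cases~I--III invoke it repeatedly to handle global coincidences of regions (e.g.\ $A_i=A_j$ in Figure~\ref{fig:regions}) that primeness alone does not exclude. Your appeal to ``primeness rules out local obstructions'' is therefore not enough: the obstructions are global, and without an argument of the type of Proposition~\ref{prop:main tool}, the claim that the two savings can be carried out without interfering remains unsupported.
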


This theorem allows us to put 627 in Table~2 for the number of 12 crossing knots with arc index 12.

\section{Proof of Theorem~\ref{thm:main}}
\newcounter{enumicnt}
A \emph{knot-spoke diagram} $D$ is a finite connected plane graph with the following properties.
\begin{enumerate}
\item There are three kinds of vertices in $D$; a distinguished vertex $v_0$ with valency at least four, 4-valent vertices, and 1-valent vertices.
\item Every edge incident to a 1-valent vertex is also incident to $v_0$. Such an edge is called a \emph{spoke}.
\setcounter{enumicnt}{\theenumi}
\end{enumerate}
A knot-spoke diagram $D$ is said to be \emph{prime} if no simple closed curve meeting $D$ in two interior points of edges separates multi-valent vertices into two parts.
A multi-valent vertex $v$ of a knot-spoke diagram $D$ is said to be a \emph{cut-point} if there is a simple closed curve $S$ meeting $D$ in $v$ and separating non-spoke edges into two parts.
A cut-point free knot-spoke diagram with more than one non-spoke edges cannot have a \emph{loop}\footnote{A closed curve created by a single edge}.
A loop in $D$ is said to be \emph{simple\/} if the other non-spoke edges are in one side.
If a prime knot-spoke diagram $D$ has a cut-point, then the distinguished vertex $v_0$ must be the cut-point with valency bigger than four.
A knot-spoke diagram without any non-spoke edges is called a \emph{wheel diagram}.

The valency of the distinguished vertex $v_0$ is an even number plus the number of spokes.
To obtain the type of a knot or link which can be projected onto a knot-spoke diagram $D$, we may assign relative heights of the endpoints of edges of $D$ in the following way.
\begin{enumerate}
\addtocounter{enumi}{\theenumicnt}
\item At every 4-valent vertex, pairs of opposite edges meet in two distinct levels. Their relative heights are indicated by drawing a small neighborhood of the vertex as a crossing in a knot diagram.
\item If the distinguished vertex $v_0$ is incident to $2a$ non-spoke edges and $b$ spokes, then its small neighborhood  is the projection of $n=a+b$ arcs at distinct levels whose relatives heights can be specified by the numbers $1,\cdots,n$. Every spoke is understood as the projection of an arc on a vertical plane whose endpoints project to $v_0$.
\setcounter{enumicnt}{\theenumi}
\end{enumerate}

Suppose a knot-spoke diagram $D$ has height information at multi-valent vertices, then
\begin{enumerate}
\addtocounter{enumi}{\theenumicnt}
\item $D$ determines a knot(or link) $L$. In this case, we say that $D$ is a knot-spoke diagram of $L$.
\item If $D$ has only 4-valent vertices then it is a knot(or link) diagram.
\item If $D$ has no non-spoke edges, it is an arc presentation.
\setcounter{enumicnt}{\theenumi}
\end{enumerate}

In \cite{BP2000}, Bae and Park\footnote{Not the second author of this article} used knot-spoke diagrams to prove Theorem~\ref{Bae-Park, 2000}.

\medskip %\clearpage

Let $e$ be an edge of a cut-point free knot-spoke diagram $D$ incident to $v_0$
and to another vertex $v_1$ which is 4-valent. We denote by $D_e$ the knot-spoke diagram
obtained by contracting $e$ and replacing any simple loop thus
created by a spoke in the following way:

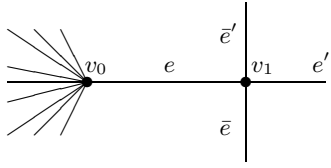
\begin{figure}[htb]
\centering
\myfbox{
\begin{picture}(120,60)
\small
\put(30,30){\circle*{4}} \put(90,30){\circle*{4}}
\put(29,34){$v_0$} \put(59,34){$e$} \put(92,34){$v_1$}
\put(80,10){$\bar e$} \put(80,45){$\bar e^\prime$} \put(115,34){$e^\prime$}
\put(30,30){\line(-1, 2){10}}
\put(30,30){\line(-1, 1){20}}
\put(30,30){\line(-3, 2){30}}
\put(30,30){\line(-5, 1){30}}
\put(30,30){\line(-1, 0){30}}
\put(30,30){\line(-4,-1){30}}
\put(30,30){\line(-3,-2){30}}
\put(30,30){\line(-1,-1){20}}
\put(30,30){\line(-1,-2){10}}
\put(30,30){\line( 1, 0){60}}
\put(90,30){\line(1, 0){30}}
\put(90,30){\line(0, 1){30}}
\put(90,30){\line(0,-1){30}}
\end{picture}
}
\caption{Local diagram of $D$ near $e$}
\label{fig:D near e}
\end{figure}

\medskip
Suppose that $e$, $\bar e$, $e^\prime$ and $\bar e^\prime$ are four edges incident to $v_1$, around in this order so that $v_1$ is the crossing of the two arcs $e\cup e^\prime$ and $\bar e\cup\bar e^\prime$.
Before the contraction of $e$, we adjust the endpoint-heights at $v_1$ as follows.
\begin{enumerate}
\addtocounter{enumi}{\theenumicnt}
\item $e$ is made horizontal so that $e^\prime$ is given the height of $e$ at $v_0$.
\item If $\bar e\cup\bar e^\prime$ undercrosses $e\cup e^\prime$ then $\bar e$ and $\bar e^\prime$ are given a height lower than the lowest height at $v_0$. If $\bar e\cup\bar e^\prime$ overcrosses $e\cup e^\prime$ then $\bar e$ and $\bar e^\prime$ are given a height higher than the highest height at $v_0$.
\setcounter{enumicnt}{\theenumi}
\end{enumerate}
Then as $e$ is contracted to $v_0$, the ends of $\bar e$, $e^\prime$ and $\bar e^\prime$ at $v_1$ are moved to $v_0$ horizontally along $e$. If none of $\bar e$, $e^\prime$ and $\bar e^\prime$ is incident to $v_0$ in $D$, no loop is created by the contraction of $e$. In this case, $D_e$ is obtained by the contraction of $e$ only. Suppose $\bar e$ is incident to $v_0$ in $D$. Because $D$ is cut-point free, $\bar e$ becomes a simple loop by the contraction of $e$. To replace this loop by a spoke, we fold the half of $\bar e$ previously incident to $v_1$ onto the other half. If in case $\bar e^\prime$ is incident to $v_0$ in $D$, a simple loop is created and is replaced by a spoke in a similar manner. If in case $e^\prime$ is incident to $v_0$ in $D$, a non-simple loop is created. This loop remains as is in~$D_e$ so that $v_0$ becomes a cut-point unless the loop is the only non-spoke edge of $D_e$.

\medskip
There are two important facts to point out.
\begin{enumerate}
\addtocounter{enumi}{\theenumicnt}
\item Under the process of of creating $D_e$ from $D$, the sum of the number of regions divided by the diagram and the number of spokes is unchanged.
\item $D_e$ is prime if $D$ is prime.
\setcounter{enumicnt}{\theenumi}
\end{enumerate}

\begin{lemma}\label{lem:adapt-key-BP2000} Let $D$ be a knot-spoke diagram without cut-points.
Suppose that $D$ has at least two multi-valent vertices. Then there
are at least two non-loop non-spoke edges $e$ and $f$, incident to
$v_0$, such that the knot-spoke diagrams $D_e$ and $D_f$ have no cut-points.
\end{lemma}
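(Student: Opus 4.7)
The plan is to reduce the lemma to a counting argument over the non-loop non-spoke edges incident to $v_0$. Let $E_0$ denote this set. First I would verify that $|E_0|\ge 2$. Since $D$ is connected and has a multi-valent vertex other than $v_0$, at least one non-spoke edge leaves $v_0$, so $|E_0|\ge 1$. If $|E_0|=1$, that unique edge would be a bridge connecting $v_0$ to the rest of the multi-valent subgraph, and one could enclose this far component by a simple closed curve through $v_0$ separating non-spoke edges, making $v_0$ a cut-point of $D$ --- contradicting the hypothesis. Hence $|E_0|\ge 2$.

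Next I would show that the only vertex of $D_e$ that can be a new cut-point is the merged vertex (the new $v_0$). Concretely, if a 4-valent vertex $w\ne v_1$ of $D$ were a cut-point of $D_e$, the witnessing simple closed curve through $w$ in $D_e$ could be lifted to a simple closed curve through $w$ in $D$ meeting $D$ only at $w$; since the contraction operation $D\mapsto D_e$ only identifies previously separated plane regions and cannot split any, this lifted curve would already witness $w$ as a cut-point of $D$, contradicting the assumption.

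The main step is to control the edges $e\in E_0$ for which the merged vertex becomes a cut-point of $D_e$; call such $e$ \emph{bad}. I would show that $e$ is bad precisely when there is a simple closed curve $S_e$ in the plane meeting $D$ exactly in $\{v_0, v_1\}$ together with the interior of $e$, and separating the non-spoke edges of $D\setminus e$ into two non-empty groups in a way not cured by the folding of the resulting loops into spokes. Associated to each bad $e$ is a pair of arcs from $v_0$ to $v_1$ whose union is $S_e$. Using planarity together with the cyclic order of $E_0$ around $v_0$, I would argue that the bad edges form a structure bounded by good ones: at each end of any maximal contiguous run of bad edges in the cyclic order of $E_0$, the neighboring edge just outside the run must itself be good, because its associated separating curve cannot be pushed any further outward without crossing an edge of $D$ transversally (which would turn the curve into a primeness witness rather than a cut-point witness, or force $v_0$ to have been a cut-point of $D$). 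Since $|E_0|\ge 2$, this yields at least two good edges $e$ and $f$ as required.

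The principal obstacle is the characterization and the extremal argument in the last step, especially in the delicate degenerate cases where $\bar e$, $\bar e'$, or $e'$ happens already to be incident to $v_0$ in $D$ --- that is, when the contraction folds an edge into a spoke or creates a non-simple loop. Each such case requires a separate verification that the combinatorial structure still supports the extremal argument and that the candidate curves $S_e$ really do behave as the planarity picture suggests. Once this case analysis is in hand, the desired pair $(e,f)$ can simply be read off the two ``extremal'' positions in the cyclic order around $v_0$.
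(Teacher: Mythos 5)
First, note that the paper gives no proof of this lemma at all: it declares the result to be a direct translation of Lemma~\ref{lem:key-BP2000} (\cite[Lemma 1]{BP2000}) into the language of knot-spoke diagrams and omits the argument, so your attempt must be measured against what such a translation would have to contain. Your preliminary reductions are sound in spirit: that any cut-point of $D_e$ away from the merged vertex would lift to a cut-point of $D$ is correct, since $D$ and $D_e$ agree outside a neighborhood of $e$ and $v_0$. (A small slip in the step $|E_0|\ge 2$: if the unique non-loop non-spoke edge $e$ at $v_0$ runs to $v_1$, a simple closed curve through $v_0$ leaves \emph{all} non-spoke edges on one side and so does not exhibit $v_0$ as a cut-point; the cut-point you actually obtain is $v_1$, separating $e$ from the other three edges at $v_1$. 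The conclusion $|E_0|\ge 2$ still holds.)

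The genuine gap is in your main step. The assertion that ``the neighboring edge just outside a maximal contiguous run of bad edges must be good'' is true by the definition of a maximal run and therefore carries no content; what must be proved is that at least two edges of $E_0$ are good, i.e.\ that it is impossible for all of them, or all but one, to be bad. A cyclic order has no extremal elements, so ``the two extremal positions in the cyclic order around $v_0$'' is undefined precisely in the case that has to be excluded (every edge bad), and you also never establish that bad edges occupy contiguous positions around $v_0$ --- there is no a priori reason they should. The quantitative conclusion requires an actual extremal structure, and the one available here is not the cyclic order but the nesting of the separating curves: to each bad edge $e$ one associates the disk bounded by $S_e$ on a definite side, these disks are partially ordered by inclusion, and a minimality (innermost-disk) argument forces the existence of good edges, with a second good edge coming from a second minimal disk or from the complementary region. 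This is the engine of \cite[Lemma 1]{BP2000}, and it is exactly the piece missing from your outline. You rightly flag the degenerate contractions (loops folded into spokes, $e^\prime$ already incident to $v_0$) as needing separate verification, but as written those verifications have no completed argument to attach to.
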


This lemma is adapted from Lemma~\ref{lem:key-BP2000}~\cite[Lemma 1]{BP2000} to apply directly to knot-spoke diagrams.
We omit the proof as this will be almost a direct translation into the language of knot-spoke diagrams.

\begin{lemma}[Bae-Park]\label{lem:key-BP2000} Let $G$ be a connected plane
graph without cut vertices and $v$ a vertex of $G$. For an edge $e$
incident to $v$, let $\overline{G_e}$ denote the graph obtained from
$G$ by contracting $e$ and then by deleting all the innermost loops
based at $v$. Suppose that $G$ has at least two vertices and all
vertices except $v$ are 4-valent. Then there are at least two edges
$e$ and $f$, incident to $v$, such that new graphs $\overline{G_e}$
and $\overline{G_f}$ have no cut vertices.
\end{lemma}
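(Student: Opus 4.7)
The plan is to prove the lemma by induction on the number of vertices of $G$ distinct from $v$. The base case is immediate: if there is only one other vertex $w$, then 2-connectedness together with $w$ being 4-valent forces $G$ to consist of four parallel edges between $v$ and $w$. Contracting any such edge creates three loops at the merged vertex, all of which are innermost and therefore deleted, so $\overline{G_e}$ has no cut vertex and every edge at $v$ qualifies.

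For the inductive step, list the edges incident to $v$ in the cyclic order given by the plane embedding as $e_1,\ldots,e_k$, and call $e_i$ \emph{good} if $\overline{G_{e_i}}$ has no cut vertex, \emph{bad} otherwise. I would first give a structural characterization of badness. Writing $e_i=vw_i$, the contraction identifies $v$ with $w_i$, and the innermost-loop deletion strips off precisely those newly created $v$--$w_i$ parallel arcs that bound an empty disk. Any cut vertex $u_i$ of $\overline{G_{e_i}}$ must therefore come from a 2-vertex separator $\{v,u_i\}$ of $G$, where $u_i$ is either $w_i$ itself or a ``further'' vertex whose separating role survives the contraction. Concretely, each bad $e_i$ furnishes a witness $u_i\ne v$ together with a planar sector around $v$ which, after removing $\{v,u_i\}$, splits off from the rest of $G$ along $e_i$.

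Now assume for contradiction that at most one $e_i$ is good, so at least $k-1$ witness pairs $\{v,u_i\}$ are produced. Using planarity and 2-connectedness, I would show that in the cyclic order two consecutive witnesses either coincide or bound a proper plane subgraph $H\subsetneq G$ which, after adjusting along the separating pair by collapsing the other side into a single 4-valent vertex at $u_i$, still satisfies the hypotheses of the lemma with the same distinguished vertex $v$. Applying the inductive hypothesis inside $H$ yields two edges at $v$ that are good in $H$; a routine check shows that a good edge of $H$ remains good in $G$, producing a second good edge of $G$ and contradicting the assumption.

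The main obstacle will be the structural characterization of bad edges and the subsequent inductive reduction in the presence of parallel edges and pre-existing loops at $v$. Making precise the correspondence between 2-separators of $G$ and cut vertices of $\overline{G_e}$, and ensuring that the ``smaller'' subgraph obtained by cutting along $\{v,u_i\}$ retains 4-valency at every vertex distinct from $v$, is the technical heart of the argument. The cleanest way I know to avoid these pitfalls is to strengthen the inductive hypothesis so that loops at $v$ are permitted from the start, and to formalize the cutting operation as replacing the far side of $\{v,u_i\}$ by a single 4-valent vertex at $u_i$, so that the inductive call is literal and the pull-back of good edges is immediate.
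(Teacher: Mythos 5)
This lemma is quoted verbatim from Bae--Park \cite{BP2000} (their Lemma~1); the paper you were given does not prove it, and it even omits the proof of its own adapted version (Lemma~\ref{lem:adapt-key-BP2000}), so there is no in-paper argument to measure your route against. Judged on its own terms, your proposal is a strategy outline rather than a proof, and the part you defer is exactly the mathematical content of the lemma. Two specific points. First, your ``structural characterization of badness'' is stated loosely: since un-contracting $e_i$ and restoring deleted loops cannot destroy a cut vertex located away from the merged vertex, the only possible cut vertex of $\overline{G_{e_i}}$ is the merged vertex itself, so a bad edge $e_i=vw_i$ corresponds to a separation of $G$ along the pair $\{v,w_i\}$ (not along $\{v,u_i\}$ for some further witness $u_i$); you should pin this down before anything else, because the whole induction hangs on which pairs actually separate.

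Second, and more seriously, the inductive step is asserted, not performed. The claim that when at most one edge is good, ``two consecutive witnesses either coincide or bound a proper plane subgraph $H$'' satisfying the hypotheses, and that a good edge of $H$ pulls back to a good edge of $G$, is precisely where planarity, the cyclic order of edges at $v$, and the laminar structure of the separating curves through $v$ must be exploited; you acknowledge this is ``the technical heart'' and leave it open. In particular you must rule out that the separators associated to distinct bad edges interleave around $v$ in a way that leaves no strictly smaller $H$ with $v$ still distinguished and all other vertices $4$-valent, and you must verify that collapsing the far side of a separator to a single $4$-valent vertex is even possible (the far side may meet $v$ in an odd number of edge-ends, or in more than four). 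The pull-back of goodness also needs an argument: a cut vertex of $\overline{G_e}$ is necessarily the merged vertex, and you must check that the corresponding separation is visible inside $H$ after the far side has been collapsed. Until these steps are supplied, the proposal identifies a plausible induction but does not establish the lemma.
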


Suppose that $D$ is a connected link diagram of a link $L$ without nugatory crossings. Let $c(D)$ and $r(D)$ denote the number of crossings in $D$ and the number of regions of the plane divided by $D$, respectively. Then $r(D)=c(D)+2$. In proving Theorem~\ref{Bae-Park, 2000}, the authors of \cite{BP2000} considered $D$ as a knot-spoke diagram and showed that there is a sequence of edges $e_1,\ldots,e_k$ of $D$ such that
$$D_{e_1 \cdots\,e_k}=({}\cdots(D_{e_1})_{e_2}\cdots)_{e_k}$$
 has only one non-spoke edge which then must be a loop. Using the fact
$$s(D_{e_1\cdots\,e_i})+r(D_{e_1\cdots\,e_i})=r(D)$$
for all $i$, where $s(\ )$ denotes the number of spokes, they found that $s(D_{e_1\cdots\,e_k})=r(D)-2=c(D)$. Then the loop which is the only non-spoke edge can be made into two spokes so that the wheel diagram so obtained has exactly $c(D)+2$ spokes, showing $\alpha(L)\le c(D)+2$. The  inequality $\alpha(L)< c(D)+2$ when $D$ is non-alternating was obtained by contracting two adjacent edges simultaneously creating one less spokes than usual~\cite{BP2000,B2002}.
In the following, we argue that such contractions can be done twice, simultaneously or successively, to construct a wheel diagram having $c(D)$ spokes,  showing $\alpha(L)\le c(L)$ when $D$ has the minimal number of crossings.

Before we analyze non-alternating diagrams for our purpose, we state our main tool.
\begin{proposition}\label{prop:main tool}
Let $D$ be a prime cut-point free knot-spoke diagram and let $e$ be an edge incident to $v_0$ and to another $4$-valent vertex $v_1$ such that $D_e$ has a cut-point. Then there exists a simple closed curve $S_e$ satisfying the following conditions.
\begin{enumerate}
\item $D_e\cap S_e=v_0$
\item $S_e$ separates $\bar e$ and $\bar e^\prime$ where the four edges incident to $v_1$ in $D$ are labeled with $e$, $\bar e$, $e^\prime$, $\bar e^\prime$ as in Figure~\ref{fig:D near e}.
\item $S_e$ separates $D_e$ into two knot-spoke diagrams $\bar D$ and $\bar D^\prime$ containing $\bar e$ and $\bar e^\prime$, respectively. Furthermore $\bar D^\prime$ is prime and cut-point free, and there is a sequence of non-spoke edges $e_1,\ldots,e_k$ of $D$ not contained in $\bar D^\prime$ such that the knot-spoke diagram $D_{e_1 e_2 \cdots\,e_k}$ is identical with $\bar D^\prime$ on non-spoke edges in one side of $S_e$ and has only spokes in the other side.
\end{enumerate}
\end{proposition}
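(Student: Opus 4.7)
The plan is to first locate the cut-point of $D_e$, then argue via a cyclic-order case analysis at $v_0$ that any witnessing simple closed curve $S_e$ must separate $\bar e$ from $\bar e'$, and finally to build the contraction sequence by iterated application of Lemma~\ref{lem:adapt-key-BP2000}.

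First, the primality-preservation fact noted immediately after the construction of $D_e$ gives that $D_e$ is prime. Since in a prime knot-spoke diagram the distinguished vertex is the only possible cut-point, $v_0$ is the cut-point of $D_e$, and there exists a simple closed curve $S_e$ meeting $D_e$ only at $v_0$ that cuts its non-spoke edges into two nonempty collections. This yields condition (1) for some choice of $S_e$, and the task is to refine the choice so that (2) and (3) hold as well.

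For condition (2), I would examine how $S_e$ splits the cyclic order at $v_0$ in $D_e$. The three edges $\bar e$, $e'$, $\bar e'$ occupy three consecutive positions in that cyclic order (they sit in the slot previously used by $e$, inheriting the cyclic order they had at $v_1$ with $e$ deleted), so the two endpoints of $S_e$ at $v_0$ split this consecutive triple in essentially one of three ways. If all three lie on the same side of $S_e$, then $S_e$ can be pushed back through a small neighborhood of the contracted edge to produce a simple closed curve in $D$ meeting $D$ only at $v_0$ and separating its non-spoke edges, contradicting cut-point freeness of $D$ at $v_0$. If instead $\{e'\}$ is separated from $\{\bar e,\bar e'\}$, then both endpoints of $S_e$ at $v_0$ lie strictly inside the new triple, and the lift of $S_e$ to $D$ is a simple closed curve meeting $D$ only at $v_1$ and separating $e'$ from the three other edges there, contradicting cut-point freeness of $D$ at $v_1$. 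The only remaining cases separate $\bar e$ from $\bar e'$, giving (2). Among all such curves $S_e$, I would then choose one minimizing the number of non-spoke edges on the $\bar e'$ side; by minimality, any internal cut-point or non-prime $2$-edge cut of $\bar D'$ would give a smaller witness, so $\bar D'$ is automatically prime and cut-point free.

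For condition (3), I would construct $e_1,\dots,e_k$ inductively by repeatedly applying Lemma~\ref{lem:adapt-key-BP2000} to the subdiagram on the $\bar e$ side of $S_e$: at each stage the lemma supplies a non-spoke edge incident to $v_0$ whose contraction preserves cut-point freeness on that side, and the process terminates once that side consists only of spokes. Because the two sides of $S_e$ meet solely at $v_0$, contractions supported on the $\bar e$ side leave the non-spoke edges on the $\bar e'$ side intact, and the region-plus-spoke invariance ensures the bookkeeping of newly produced spokes works out. I expect the hard part to be precisely this last step: one must check that the hypotheses of Lemma~\ref{lem:adapt-key-BP2000} continue to hold for the $\bar e$-side subdiagram after each contraction, and that performing the contraction inside the ambient $D$ (rather than inside the isolated subdiagram) yields the intended local effect without disturbing $\bar D'$. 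By contrast, the case analysis producing (2), although the conceptual core of the proposition, reduces to careful tracking of how the two endpoints of $S_e$ lift through the neighborhood of $e$.
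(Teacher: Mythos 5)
Your handling of conditions (1) and (2) matches the paper's: (1) follows because $v_0$ is the only possible cut-point of the prime diagram $D_e$, and (2) follows by lifting $S_e$ to a curve $S$ with $D\cap S=e$ and pushing it off $e$ to contradict cut-point freeness of $D$; your three-way split of the consecutive triple $\bar e, e^\prime, \bar e^\prime$ at $v_0$ is a slightly more explicit version of the paper's one-line argument. The genuine gap is in condition (3). You propose to build the sequence by ``repeatedly applying Lemma~\ref{lem:adapt-key-BP2000} to the subdiagram on the $\bar e$ side of $S_e$,'' but that lemma requires a cut-point free diagram, and the closed-up tangle on the side to be emptied need not be one. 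This is precisely where the paper does its real work, through a three-case analysis according to how the lift $S$ splits $\{\bar e, e^\prime,\bar e^\prime\}$ and whether the separating curve is essentially unique. When $e^\prime$ lands on the same side as $\bar e$ (the paper's Case 2), the diagram $\bar Tv_0$ obtained by amalgamating $\bar e$ and $e^\prime$ into a single edge can have cut-points $w_1,\ldots,w_m$ at ordinary $4$-valent vertices; the paper must truncate at each $w_i$, apply the lemma to each truncated piece $E_i$ in order, and splice the resulting sequences together via the connecting edges $a_i$. When there are two essentially distinct separating curves (Case 3), the interior of the outer curve is pinched at $v_0$ into two tangles $Tv_0$ and $T^\prime v_0$, each of which must be processed by the lemma separately before the final contraction of $\bar e$. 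You explicitly flag ``one must check that the hypotheses of Lemma~\ref{lem:adapt-key-BP2000} continue to hold'' as the hard part, but that check fails as stated, and the constructions above are exactly what is needed to repair it.

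A secondary point: choosing $S_e$ to minimize the number of non-spoke edges on the $\bar e^\prime$ side is a reasonable device for securing cut-point freeness of $\bar D^\prime$ at $v_0$ (it plays the role of the paper's choice of the outermost curve $S^\prime_e$ in Case 3), but primality of $\bar D^\prime$ is better deduced directly from the fact that $D_e$ is prime whenever $D$ is, since a primality-violating curve meets the diagram in interiors of edges rather than at $v_0$ and need not yield a smaller witness for your minimization.
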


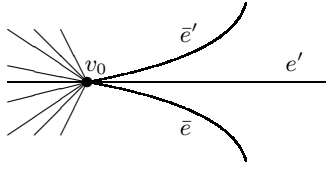
\begin{figure}[htb]
\centering
\myfbox{
\begin{picture}(120,60)
\small
\put(30,30){\circle*{4}}
\put(29,34){$v_0$} \put(105,34){$e^\prime$}
\put(65,10){$\bar e$} \put(65,45){$\bar e^\prime$}
\put(30,30){\line(-1, 2){10}}
\put(30,30){\line(-1, 1){20}}
\put(30,30){\line(-3, 2){30}}
\put(30,30){\line(-5, 1){30}}
\put(30,30){\line(-1, 0){30}}
\put(30,30){\line(-4,-1){30}}
\put(30,30){\line(-3,-2){30}}
\put(30,30){\line(-1,-1){20}}
\put(30,30){\line(-1,-2){10}}
\put(30,30){\line( 1, 0){60}}
\qbezier(30,30)(85,40)(90,60)
\put(90,30){\line(1, 0){30}}
\qbezier(30,30)(85,20)(90, 0)
\end{picture}
}
\caption{Local diagram of $D_e$ near $v_0$}
\label{fig:De near v0}
\end{figure}

\begin{proof}
Because $D$ is cut-point free, $v_0$ must be the only cut-point of $D_e$, hence a simple closed curve $S_e$ satisfying (1) exists.
Then we may consider a simple closed curve $S$ such that $D\cap S=e$ and $S_e$ is obtained from $S$ by contracting $e$. If $S_e$ does not separate $\bar e$ and $\bar e^\prime$ in $D_e$, then $S$ can be pushed off $e$ so that it meets $D$ only in $v_0$. This contradicts the assumption that $D$ is cut-point free. Therefore condition~(2) holds.
Because $D$ is cut-point free, we only need to consider the following three cases to prove condition (3).
\begin{itemize}
\item{} \textbf{\it Case 1.} The $S_e$ satisfying (1) and (2) is essentially unique and $S$ separates $\{\bar e,e^\prime,\bar e^\prime\}$ into $\{\bar e\}$ and $\{e^\prime,\bar e^\prime\}$.
\item{} \textbf{\it Case 2.} The $S_e$ satisfying (1) and (2) is essentially unique and $S$ separates $\{\bar e,e^\prime,\bar e^\prime\}$ into $\{\bar e,e^\prime\}$ and $\{\bar e^\prime\}$.
\item{} \textbf{\it Case 3.} There are two essentially unique distinct simple closed curves $S$ and $S^\prime$ satisfying $D\cap S=D\cap S^\prime=e$ such that $S_e$ and $S^\prime_e$ satisfy (1) and (2), and $S$ and $S^\prime$ separate $\{\bar e,e^\prime,\bar e^\prime\}$ into $\{\bar e\}$, $\{e^\prime,\bar e^\prime\}$ and $\{\bar e,e^\prime\}$, $\{\bar e^\prime\}$, respectively.
\end{itemize}

In the Figures~\ref{fig:case1}--\ref{fig:last in proof} of knot-spoke diagrams, all the spokes are omitted for simplicity.

\begin{figure}[htb]
\centering
\myfbox{
\begin{picture}(110,75)(-10,-15)
\small
\put(30,30){\circle*{4}}
\put(29,34){$v_0$} \put(92,34){$e^\prime$}
\put(65,13){$\bar e$} \put(65,45){$\bar e^\prime$}
\put(30,30){\line(-1, 2){10}}
\put(30,30){\line(-1, 1){20}}
\put(30,30){\line(-3, 2){30}}
\put(30,30){\line(-5, 1){30}}
\put(30,30){\line(-1, 0){30}}
\put(30,30){\line(-4,-1){30}}
\put(30,30){\line(-3,-2){30}}
\put(30,30){\line(-1,-1){20}}
\put(30,30){\line(-1,-2){10}}
\put(30,30){\line( 1, 0){60}}
\qbezier(30,30)(85,40)(90,60)
\put(90,30){\line(1, 0){10}}
\qbezier(30,30)(85,20)(90,10)
\put(-5,10){\line(1,0){100}} \put(95,10){\line(0,-1){20}} \put(-5,-10){\line(1,0){100}} \put(-5,10){\line(0,-1){20}}
\put(43,-4){$T$}
\put(-15,15){$S_e$}
\thicklines
\qbezier[20](30,30)(10,20)(-10,10)
\qbezier[15](-10,10)(-10,-2.5)(-10,-15)
\qbezier[50](-10,-15)(45,-15)(100,-15)
\qbezier[18](100,-15)(100,2.5)(100,20)
\qbezier[35](100,20)(65,25)(30,30)
\end{picture}
}
\qquad\qquad
\myfbox{
\begin{picture}(110,75)(-10,-15)
\small
\put(30,30){\circle*{4}}
\put(29,34){$v_0$} \put(92,34){$e^\prime$}
\put(55,10){$\bar e$} \put(65,45){$\bar e^\prime$}
\put(30,30){\line(-1, 2){10}}
\put(30,30){\line(-1, 1){20}}
\put(30,30){\line(-3, 2){30}}
\put(30,30){\line(-5, 1){30}}
\put(30,30){\line(-1, 0){30}}
\put(30,30){\line(-4,-1){30}}
\put(30,30){\line( 1, 0){60}}
\qbezier(30,30)(85,40)(90,60)
\put(90,30){\line(1, 0){10}}
\qbezier(30,30)(60,20)(50,10) \qbezier(30,30)(40,0)(50,10)
\put(15,-8){$(Tv_0)_{e_1 \cdots\,e_{k-1}}$}
\put(-15,15){$S_e$}
\thicklines
\qbezier[20](30,30)(10,20)(-10,10)
\qbezier[15](-10,10)(-10,-2.5)(-10,-15)
\qbezier[50](-10,-15)(45,-15)(100,-15)
\qbezier[18](100,-15)(100,2.5)(100,20)
\qbezier[35](100,20)(65,25)(30,30)
\end{picture}
}
\caption{Case 1 : $D_e$ and $D_{e\, e_1 \cdots\,e_{k-1}}$}\label{fig:case1}
\end{figure}
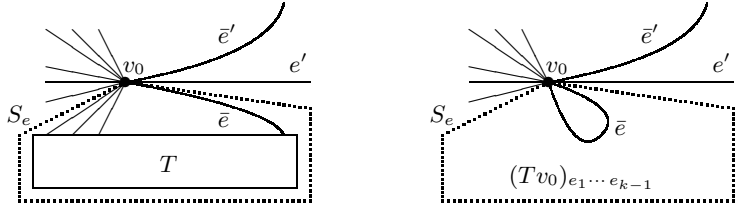

\begin{figure}[htb]
\centering
\myfbox{
\begin{picture}(110,75)(-10,-15)
\small
\put(30,30){\circle*{4}} \put(90,30){\circle*{4}}
\put(29,34){$v_0$}  \put(59,34){$e$} \put(95,34){$e^\prime$}
\put(80,15){$\bar e$} \put(80,45){$\bar e^\prime$}
\put(30,30){\line(-1, 2){10}}
\put(30,30){\line(-1, 1){20}}
\put(30,30){\line(-3, 2){30}}
\put(30,30){\line(-5, 1){30}}
\put(30,30){\line(-1, 0){30}}
\put(30,30){\line(-4,-1){30}}
\put(30,30){\line(-3,-2){30}}
\put(30,30){\line(-1,-1){20}}
\put(30,30){\line(-1,-2){10}}
\put(30,30){\line( 1, 0){60}}
\put(90,30){\line(0, 1){30}}
\put(90,30){\line(1, 0){10}}
\put(90,30){\line(0,-1){20}}
\put(-5,10){\line(1,0){100}} \put(95,10){\line(0,-1){20}} \put(-5,-10){\line(1,0){100}} \put(-5,10){\line(0,-1){20}}
\put(43,-4){$T$}
\put(-15,15){$S$}
\thicklines
\qbezier[20](30,30)(10,20)(-10,10)
\qbezier[15](-10,10)(-10,-2.5)(-10,-15)
\qbezier[50](-10,-15)(45,-15)(100,-15)
\qbezier[18](100,-15)(100,2.5)(100,20)
\qbezier[20](30,30)(60,30)(90,30)
\qbezier[7](90,30)(95,25)(100,20)
\end{picture}
}
\qquad\qquad
\myfbox{
\begin{picture}(110,75)(-10,-15)
\small
\put(30,30){\circle*{4}} \put(90,30){\circle*{4}}
\put(29,34){$v_0$}  \put(59,34){$e$} \put(95,34){$e^\prime$}
\put(80,15){$\bar e$} \put(80,45){$\bar e^\prime$}
\put(30,30){\line(-1, 2){10}}
\put(30,30){\line(-1, 1){20}}
\put(30,30){\line(-3, 2){30}}
\put(30,30){\line(-5, 1){30}}
\put(30,30){\line(-1, 0){30}}
\put(30,30){\line(-4,-1){30}}
\put(30,30){\line( 1, 0){60}}
\put(90,30){\line(0, 1){30}}
\put(90,30){\line(1, 0){10}}
\qbezier(90,30)(90,05)(80,05) \qbezier(80,05)(60,05)(30,30)
\put(-15,15){$S$}
\thicklines
\qbezier[20](30,30)(10,20)(-10,10)
\qbezier[15](-10,10)(-10,-2.5)(-10,-15)
\qbezier[50](-10,-15)(45,-15)(100,-15)
\qbezier[18](100,-15)(100,2.5)(100,20)
\qbezier[20](30,30)(60,30)(90,30)
\qbezier[7](90,30)(95,25)(100,20)
\end{picture}
}
\caption{$D$ and $D_{e_1\cdots\,e_{k-1}}$}
\label{fig:D and Dprime}
\end{figure}
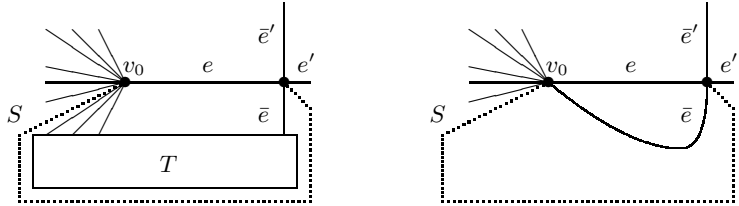

\begin{figure}[htb]
\centering
\myfbox{
\begin{picture}(110,75)(-10,-15)
\small
\put(30,30){\circle*{4}}
\put(29,34){$v_0$} \put(92,34){$e^\prime$}
%\put(70,13){$\bar e$}
\put(65,45){$\bar e^\prime$}
\put(30,30){\line(-1, 2){10}}
\put(30,30){\line(-1, 1){20}}
\put(30,30){\line(-3, 2){30}}
\put(30,30){\line(-5, 1){30}}
\put(30,30){\line(-1, 0){30}}
\put(30,30){\line(-4,-1){30}}
\put(30,30){\line( 1, 0){60}}
\qbezier(30,30)(85,40)(90,60)
\put(90,30){\line(1, 0){10}}
\put(-15,15){$S_e$}
\thicklines
\qbezier[20](30,30)(10,20)(-10,10)
\qbezier[15](-10,10)(-10,-2.5)(-10,-15)
\qbezier[50](-10,-15)(45,-15)(100,-15)
\qbezier[18](100,-15)(100,2.5)(100,20)
\qbezier[35](100,20)(65,25)(30,30)
\end{picture}
}
\caption{$D_{e_1\cdots\,e_{k}}$}
\label{fig:(Dprime)_e}
\end{figure}
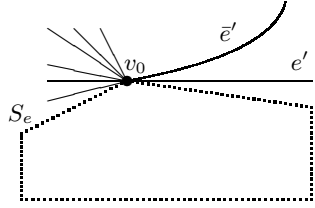

\subsection*{Case 1} As shown in Figure~\ref{fig:case1}, we consider the tangle $T$ inside $S_e$ whose end arcs are joined to $v_0$ to make a knot-spoke diagram $Tv_0$. Then $Tv_0$ is cut-point free. By applying Lemma~\ref{lem:adapt-key-BP2000} repeatedly to $Tv_0$, there is a sequence of edges $e_1,\ldots,e_{k-1}$ of $D$ inside $S$ (hence of $Tv_0$), all distinct from $\bar e$, such that the knot-spoke diagram $(Tv_0)_{e_1\cdots e_{k-1}}$, created inside $S_e$, has only one non-spoke edge which is the loop corresponding to $\bar e$.
As shown in Figure~\ref{fig:D and Dprime}, in the knot-spoke diagram $D_{e_1 \cdots\,e_{k-1}}$, the two edges $e$ and $\bar e$ together bound a region which may contain spokes. Taking $e_k=e$, the knot-spoke diagram $D_{e_1\cdots\,e_{k}}$ obtained by contracting $e$ and replacing $\bar e$ by a spoke is identical with $D_e$ outside $S_e$ and has only spokes inside $S_e$. % See Figure~\ref{fig:(Dprime)_e}.
Therefore condition~(3) is satisfied in this case.

\begin{figure}[htb]
\centering
\myfbox{
\begin{picture}(110,75)(-10,-15)
\small
\put(30,30){\circle*{4}}
\put(29,34){$v_0$} \put(82,24){$e^\prime$}
\put(65,13){$\bar e$} \put(65,45){$\bar e^\prime$}
\put(30,30){\line(-1, 2){10}}
\put(30,30){\line(-1, 1){20}}
\put(30,30){\line(-3, 2){30}}
\put(30,30){\line(-5, 1){31.25}}
\put(30,30){\line(-1, 0){25}}
\put(30,30){\line(-4,-1){20}}
\put(30,30){\line(-3,-2){15}}
\put(30,30){\line(-1,-1){12.5}}
\put(30,30){\line(-1,-2){8.3333333}}
\qbezier(30,30)(85,40)(90,60)
%\put(90,30){\line(1, 0){10}}
\qbezier(30,30)(70,30)(80,10)
\qbezier(30,30)(85,30)(90,10)
\put(25,10){\line(1,0){70}} \put(95,10){\line(0,-1){20}} \put(-5,-10){\line(1,0){100}} \put(-5,40){\line(0,-1){50}}
\put(-5,40){\line(1,-1){30}}
\put(43,-4){$T$}
\put(90,44){$S_e$}
\thicklines
\qbezier[20](30,30)(10,40)(-10,50)
\qbezier[35](-10,50)(-10,17.5)(-10,-15)
\qbezier[50](-10,-15)(45,-15)(100,-15)
\qbezier[30](100,-15)(100,12.5)(100,40)
\qbezier[35](100,40)(65,35)(30,30)
\end{picture}
}
\qquad\qquad
\myfbox{
\begin{picture}(110,75)(-10,-15)
\small
\put(30,30){\circle*{4}}
\put(29,34){$v_0$} \put(82,24){$\bar ee^\prime$}
\put(30,30){\line(-5, 1){31.25}}
\put(30,30){\line(-1, 0){25}}
\put(30,30){\line(-4,-1){20}}
\put(30,30){\line(-3,-2){15}}
\put(30,30){\line(-1,-1){12.5}}
\put(30,30){\line(-1,-2){8.3333333}}
\qbezier(85,20)(80,20)(80,10)
\qbezier(85,20)(90,20)(90,10)
\put(25,10){\line(1,0){70}} \put(95,10){\line(0,-1){20}} \put(-5,-10){\line(1,0){100}} \put(-5,40){\line(0,-1){50}}
\put(-5,40){\line(1,-1){30}}
\put(43,-4){$T$}
\end{picture}
}
\caption{Case 2 : $D_e$ and $\bar Tv_0$}\label{fig:case2}
\end{figure}
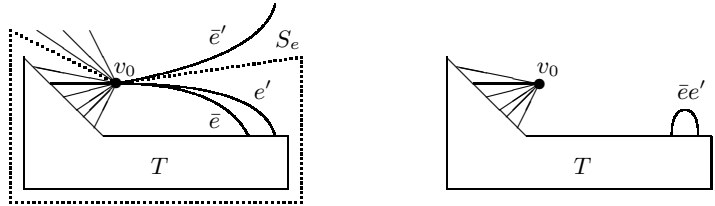

\begin{figure}[htb]
\centering
\myfbox{
\begin{picture}(170,55)(-10,-15)
\small
\put(30,30){\circle*{4}}
\put(29,34){$v_0$} \put(142,24){$\bar ee^\prime$}
\put(30,30){\line(-5, 1){31.25}}
\put(30,30){\line(-1, 0){25}}
\put(30,30){\line(-4,-1){20}}
\put(30,30){\line(-3,-2){15}}
\put(30,30){\line(-1,-1){12.5}}
\put(30,30){\line(-1,-2){8.3333333}}
\qbezier(145,20)(140,20)(140,10)
\qbezier(145,20)(150,20)(150,10)
\put(65,10){\line(1,0){10}} \put(85,10){\line(1,0){10}} \put(135,10){\line(1,0){20}}
\put(25,10){\line(0,-1){20}} \put(65,10){\line(0,-1){20}} \put(95,10){\line(0,-1){20}} \put(135,10){\line(0,-1){20}} \put(155,10){\line(0,-1){20}}
\put(-5,-10){\line(1,0){30}}
\put(65,-10){\line(1,0){10}} \put(85,-10){\line(1,0){10}} \put(135,-10){\line(1,0){20}}
\put(-5,40){\line(0,-1){50}}
\put(-5,40){\line(1,-1){30}}
\put(74,-2){$\cdots$} %\put(139,-2){$T_m$}
\put(25,-8){\line(5,2){40}}\put(25,8){\line(5,-2){40}}\put(45,0){\circle*{4}}\put(40,6){$w_1$}
\put(95,-8){\line(5,2){40}}\put(95,8){\line(5,-2){40}}\put(115,0){\circle*{4}}\put(108,6){$w_{m}$}
\put(145,44){$S_e$}
\end{picture}
}
\caption{$\bar Tv_0$ may have cut-points $w_1,\ldots,w_{m}$.}\label{fig:cut-points w_i}
\end{figure}
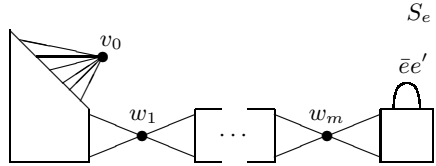

\begin{figure}[htb]
\centering
\myfbox{
\begin{picture}(130,55)(-10,-15)
\small
\put(30,30){\circle*{4}}
\put(29,34){$v_0$} %\put(142,24){$\bar ee^\prime$}
\put(30,30){\line(-5, 1){31.25}}
\put(30,30){\line(-1, 0){25}}
\put(30,30){\line(-4,-1){20}}
\put(30,30){\line(-3,-2){15}}
\put(30,30){\line(-1,-1){12.5}}
\put(30,30){\line(-1,-2){8.3333333}}
\put(40,10){\line(1,0){ 5}} \put(60,10){\line(1,0){ 5}} \put(80,10){\line(1,0){15}} %\put(135,10){\line(1,0){20}}
\put(25,10){\line(0,-1){20}} \put(40,10){\line(0,-1){20}} \put(65,10){\line(0,-1){20}} \put(80,10){\line(0,-1){20}} \put(95,10){\line(0,-1){20}} %\put(135,10){\line(0,-1){20}} %\put(155,10){\line(0,-1){20}}
\put(-5,-10){\line(1,0){30}}
\put(40,-10){\line(1,0){ 5}} \put(60,-10){\line(1,0){ 5}} \put(80,-10){\line(1,0){15}} %\put(135,-10){\line(1,0){20}}
\put(-5,40){\line(0,-1){50}}
\put(-5,40){\line(1,-1){30}}
\put(46,-2){$\cdots$} %\put(140,-2){$T_m$}
\put(25,-7.5){\line(1,1){15}}\put(25,7.5){\line(1,-1){15}}\put(32.5,0){\circle*{4}}\put(27,9){$w_1$}
\put(65,-7.5){\line(1,1){15}}\put(65,7.5){\line(1,-1){15}}\put(72.5,0){\circle*{4}}\put(62,13){$w_{i-1}$}
\qbezier(95,-7.5)(102.5,0)(102.5,0) \qbezier(95,7.5)(102.5,0)(102.5,0) \put(102.5,0){\circle*{4}}
\put(99, 7){$a_i$}\put(101,-12){$b_i$}\put(108,-2){$w_i$}
\end{picture}
}
\qquad
\myfbox{
\begin{picture}(130,55)(-10,-15)
\small
\put(30,30){\circle*{4}}
\put(29,34){$v_0$} %\put(142,24){$\bar ee^\prime$}
\put(30,30){\line(-5, 1){31.25}}
\put(30,30){\line(-1, 0){25}}
\put(30,30){\line(-4,-1){20}}
\put(30,30){\line(-3,-2){15}}
\put(30,30){\line(-1,-1){12.5}}
\put(30,30){\line(-1,-2){8.3333333}}
\put(40,10){\line(1,0){ 5}} \put(60,10){\line(1,0){ 5}} \put(80,10){\line(1,0){15}} %\put(135,10){\line(1,0){20}}
\put(25,10){\line(0,-1){20}} \put(40,10){\line(0,-1){20}} \put(65,10){\line(0,-1){20}} \put(80,10){\line(0,-1){20}} \put(95,10){\line(0,-1){20}} %\put(135,10){\line(0,-1){20}} %\put(155,10){\line(0,-1){20}}
\put(-5,-10){\line(1,0){30}}
\put(40,-10){\line(1,0){ 5}} \put(60,-10){\line(1,0){ 5}} \put(80,-10){\line(1,0){15}} %\put(135,-10){\line(1,0){20}}
\put(-5,40){\line(0,-1){50}}
\put(-5,40){\line(1,-1){30}}
\put(80,30){$E_i$}
\put(46,-2){$\cdots$} %\put(140,-2){$T_m$}
\put(25,-7.5){\line(1,1){15}}\put(25,7.5){\line(1,-1){15}}\put(32.5,0){\circle*{4}}\put(27,9){$w_1$}
\put(65,-7.5){\line(1,1){15}}\put(65,7.5){\line(1,-1){15}}\put(72.5,0){\circle*{4}}\put(62,13){$w_{i-1}$}
%\put(95,-8){\line(5,2){40}}\put(95,8){\line(5,-2){40}}\put(115,0){\circle*{4}}\put(103,8){$w_{m-1}$}
\qbezier(95,-8)(115,0)(95,8) \put(110,-2){$c_{i}$}
\end{picture}
}
\caption{$E_i$ is obtained by truncating $\bar Tv_0$ at $w_i$.}
\end{figure}
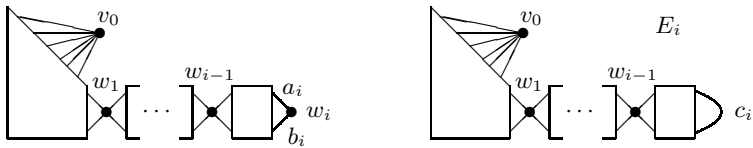

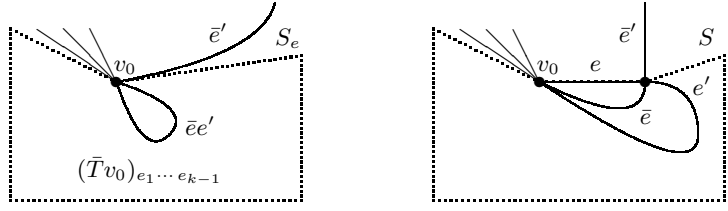
\begin{figure}[htb]
\centering
\myfbox{
\begin{picture}(110,75)(-10,-15)
\small
\put(30,30){\circle*{4}}
\qbezier(30,30)(60,20)(50,10) \qbezier(30,30)(40,0)(50,10)
\put(29,34){$v_0$} \put(56, 10){$\bar ee^\prime$}
\put(65,45){$\bar e^\prime$}
\put(30,30){\line(-1, 2){10}}
\put(30,30){\line(-1, 1){20}}
\put(30,30){\line(-3, 2){30}}
\qbezier(30,30)(85,40)(90,60)
\put(15,-6){$(\bar Tv_0)_{e_1\cdots\,e_{k-1}}$}
\put(90,44){$S_e$}
\thicklines
\qbezier[20](30,30)(10,40)(-10,50)
\qbezier[35](-10,50)(-10,17.5)(-10,-15)
\qbezier[50](-10,-15)(45,-15)(100,-15)
\qbezier[30](100,-15)(100,12.5)(100,40)
\qbezier[35](100,40)(65,35)(30,30)
\end{picture}
}
\qquad\qquad
\myfbox{
\begin{picture}(110,75)(-10,-15)
\small
\put(30,30){\circle*{4}} \put(70,30){\circle*{4}}
\put(29,34){$v_0$}  \put(49,34){$e$} \put(88,25){$e^\prime$}
\put(68,14){$\bar e$} \put(60,45){$\bar e^\prime$}
\put(30,30){\line(-1, 2){10}}
\put(30,30){\line(-1, 1){20}}
\put(30,30){\line(-3, 2){30}}
\put(30,30){\line( 1, 0){40}}
%
%\put(30,30){\line( 1, 0){40}}
%
\put(70,30){\line(0, 1){30}}
\qbezier(70,30)(70,10)(30,30)\qbezier(70,30)(90,30)(90,10)\qbezier(90,10)(90,-10)(30,30)
\put(90,44){$S$}
\thicklines
\qbezier[20](30,30)(10,40)(-10,50)
\qbezier[35](-10,50)(-10,17.5)(-10,-15)
\qbezier[50](-10,-15)(45,-15)(100,-15)
\qbezier[30](100,-15)(100,12.5)(100,40)
\qbezier[15](100,40)(85,35)(70,30)
\qbezier[15](70,30)(50,30)(30,30)
\end{picture}
}
\caption{$(\bar Tv_0)_{e_1\cdots\,e_{k-1}}$ and $D_{e_1\cdots\,e_{k-1}}$}
\end{figure}

\begin{figure}[htb]
\centering
\myfbox{
\begin{picture}(110,75)(-10,-15)
\small
\put(30,30){\circle*{4}}
\put(29,34){$v_0$} %\put(56, 10){$\bar ee^\prime$}
\put(65,45){$\bar e^\prime$}
\put(30,30){\line(-1, 2){10}}
\put(30,30){\line(-1, 1){20}}
\put(30,30){\line(-3, 2){30}}
\qbezier(30,30)(85,40)(90,60)
\put(90,44){$S_e$}
\thicklines
\qbezier[20](30,30)(10,40)(-10,50)
\qbezier[35](-10,50)(-10,17.5)(-10,-15)
\qbezier[50](-10,-15)(45,-15)(100,-15)
\qbezier[30](100,-15)(100,12.5)(100,40)
\qbezier[35](100,40)(65,35)(30,30)
\end{picture}
}
\caption{$D_{e_1\cdots\,e_{k}}$}
\end{figure}
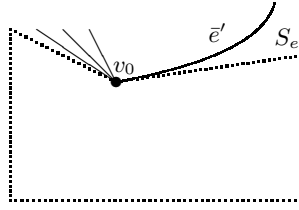

\subsection*{Case 2} As in Figure~\ref{fig:case2}, in the knot-spoke diagram $Tv_0$, the two edges $\bar e$ and $e^\prime$ are adjacently incident to $v_0$. We take off $\bar e$ and $e^\prime$ from $v_0$ and create a new edge $\bar ee^\prime$. Then we have a knot-spoke diagram $\bar Tv_0$ inside $S_e$ which may have cut-points $w_1,\ldots,w_m$ as shown by Figure~\ref{fig:cut-points w_i}.
For $i=1,\ldots,m$, let $E_i$ be the knot-spoke diagram obtained by truncating $\bar Tv_0$ at the cut-point $w_i$ and amalgamating two edges $a_i$ and $b_i$ into $c_i$.
%We define $E_{m+1}=\bar Tv_0$, $a_{m+1}=\bar e$, $b_{m+1}=e^\prime$ and $c_{m+1}=\bar ee^\prime$.
By applying Lemma~\ref{lem:adapt-key-BP2000} to $E_1$, we obtain a sequence $e_1,\ldots,e_{k_1-1}$ of edges of $E_1$, all distinct from $c_1$, such that $(E_1)_{e_1\cdots\,e_{k_1-1}}$ has only one non-spoke edge which is a loop corresponding to $c_1$.
Taking $e_{k_1}=a_1$, the knot-spoke diagram $(E_2)_{e_1\cdots\,e_{k_1}}$ is obtained from $(E_2)_{e_1\cdots\,e_{k_1-1}}$ by contracting $a_1$ and replacing the loop $b_1$ by a spoke. Proceeding in this manner, we obtain a sequence $e_{k_i+1},\ldots,e_{k_{(i+1)}-1},e_{k_{(i+1)}}=a_{i+1}$ of edges of $E_{i+1}\setminus E_{i}$ such that $(E_{i+1})_{e_1\cdots\,e_{k_{(i+1)}-1}}$ has only one non-spoke edge which is a loop corresponding to $c_{i+1}$, for $i=1,\ldots,m-1$.
Taking $e_{k_m}=\bar e$, the knot-spoke diagram $(\bar Tv_0)_{e_1\cdots\,e_{k_m}}$ is cut-point free. Applying Lemma~\ref{lem:adapt-key-BP2000}, we obtain a sequence $e_{k_m+1},\ldots,e_{k-1}$ of edges of $\bar Tv_0\setminus E_m$ such that the knot-spoke diagram $(\bar Tv_0)_{e_1\cdots\,e_{k-1}}$ has only one non-spoke edge corresponding to the loop $\bar ee^\prime$.
Finally, by taking $e_k=\bar e$, the knot-spoke diagram $D_{e_1\cdots\,e_{k}}$ obtained by contracting $\bar e$ and replacing the loops corresponding to $e$ and $e^\prime$ by spokes, has no non-spoke edges inside $S_e$ and no non-spoke edges outside $S_e$ has been contracted or became a spoke except $e$. Therefore condition~(3) is satisfied in this case.

\subsection*{Case 3}
Since $S_e$ and $S^\prime_e$ meet in $v_0$, they divide the plane into two bounded regions and one unbounded region. We may assume that $S^\prime_e$ is the boundary of the unbounded region. Then we have two cut-point free knot-spoke diagrams $Tv_0$ and $T^\prime v_0$ inside $S_e$ and between $S_e$ and $S^\prime_e$, respectively, as in Figure~\ref{fig:case3}.
By applying Lemma~\ref{lem:adapt-key-BP2000} to $Tv_0$, we obtain a sequence $e_1,\ldots,e_{j}$ of edges of $Tv_0$ such that $(Tv_0)_{e_1\cdots\,e_{j}}$ has only one non-spoke edge which is a loop corresponding to $\bar e$.
By applying Lemma~\ref{lem:adapt-key-BP2000} to $T^\prime v_0$, we obtain a sequence $e_{j+1},\ldots,e_{k-1}$ of edges of $T^\prime v_0$ such that $(T^\prime v_0)_{e_{j+1}\cdots\,e_{k-1}}$ has only one non-spoke edge which is a loop corresponding to $e^\prime$.
Then
$$(Tv_0)_{e_1\cdots\,e_{j}}\cup(T^\prime v_0)_{e_{j+1}\cdots\,e_{k-1}}=(Tv_0\cup T^\prime v_0)_{e_{1}\cdots\,e_{k-1}}.$$
Finally, by taking $e_k=\bar e$, the knot-spoke diagram $D_{e_1\cdots\,e_{k}}$ obtained by contracting $\bar e$ and replacing the loops corresponding to $e$ and $e^\prime$ by spokes, has no non-spoke edges inside $S^\prime_e$ and no non-spoke edges outside $S^\prime_e$ has been contracted or became a spoke except $e$. Therefore condition~(3) is satisfied in this case.
\end{proof}

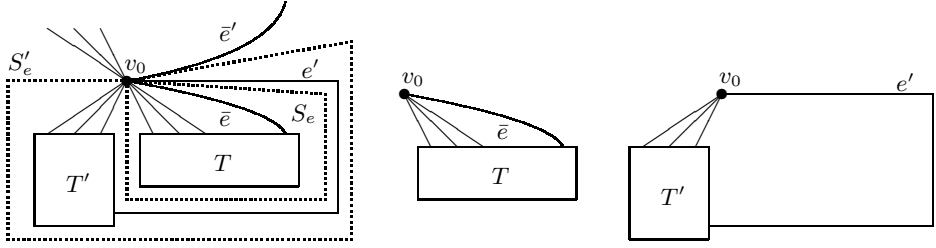
\begin{figure}[t]
\centering
\myfbox{
\begin{picture}(130,90)(-15,-30)
\small
\put(30,30){\circle*{4}}
\put(29,34){$v_0$} \put(96,31){$e^\prime$}
\put(65,13){$\bar e$} \put(65,45){$\bar e^\prime$}
\put(30,30){\line(-1, 2){10}}
\put(30,30){\line(-1, 1){20}}
\put(30,30){\line(-3, 2){30}}
%\put(30,30){\line(-5, 1){30}}
%\put(30,30){\line(-1, 0){30}}
%\put(30,30){\line(-4,-1){30}}
\put(30,30){\line(-3,-2){30}}
\put(30,30){\line(-1,-1){20}}
\put(30,30){\line(-1,-2){10}}
\put(30,30){\line( 3,-2){30}}
\put(30,30){\line( 1,-1){20}}
\put(30,30){\line( 1,-2){10}}
\put(30,30){\line( 1, 0){80}} \put(110,30){\line( 0,-1){50}} \put(110,-20){\line(-1, 0){85}}
\qbezier(30,30)(85,40)(90,60)
%\put(90,30){\line(1, 0){10}}
\qbezier(30,30)(85,20)(90,10)
\put(35,10){\line(1,0){60}} \put(95,10){\line(0,-1){20}} \put(35,-10){\line(1,0){60}} \put(35,10){\line(0,-1){20}}
\put(63,-4){$T$}
\put(-5,10){\line(1,0){30}} \put(25,10){\line(0,-1){35}} \put(-5,-25){\line(1,0){30}} \put(-5,10){\line(0,-1){35}}
\put(7,-12){$T^\prime$}
\put(92,15){$S_e$}
\thicklines
\qbezier[22](30,30)(30,7.5)(30,-15)
\qbezier[40](30,-15)(67.5,-15)(105,-15)
\qbezier[20](105,-15)(105,5)(105,25)
\qbezier[42](105,25)(67.5,27.5)(30,30)
\put(-15,34){$S^\prime_e$}
\qbezier[22](30,30)(7.5,30)(-15,30)
\qbezier[30](-15,30)(-15,0)(-15,-30)
\qbezier[65](-15,-30)(50,-30)(115,-30)
\qbezier[35](115,-30)(115,7.5)(115,45)
\qbezier[42](115,45)(72.5,37.5)(30,30)
\end{picture}
}
\quad
\myfbox{
\begin{picture}(65,65)(30,-25)
\small
\put(30,30){\circle*{4}}
\put(29,34){$v_0$} %\put(96,31){$e^\prime$}
\put(65,13){$\bar e$} %\put(65,45){$\bar e^\prime$}
\put(30,30){\line( 3,-2){30}}
\put(30,30){\line( 1,-1){20}}
\put(30,30){\line( 1,-2){10}}
\qbezier(30,30)(85,20)(90,10)
\put(35,10){\line(1,0){60}} \put(95,10){\line(0,-1){20}} \put(35,-10){\line(1,0){60}} \put(35,10){\line(0,-1){20}}
\put(63,-4){$T$}
\end{picture}
}
\quad %\qquad
\myfbox{
\begin{picture}(115,65)(-5,-25)
\small
\put(30,30){\circle*{4}}
\put(29,34){$v_0$} \put(96,31){$e^\prime$}
\put(30,30){\line(-3,-2){30}}
\put(30,30){\line(-1,-1){20}}
\put(30,30){\line(-1,-2){10}}
\put(30,30){\line( 1, 0){80}} \put(110,30){\line( 0,-1){50}} \put(110,-20){\line(-1, 0){85}}
\put(-5,10){\line(1,0){30}} \put(25,10){\line(0,-1){35}} \put(-5,-25){\line(1,0){30}} \put(-5,10){\line(0,-1){35}}
\put(7,-12){$T^\prime$}
\end{picture}
}
\caption{Case 3: $D_e$, $Tv_0$ and $T^\prime v_0$}\label{fig:case3}
\end{figure}

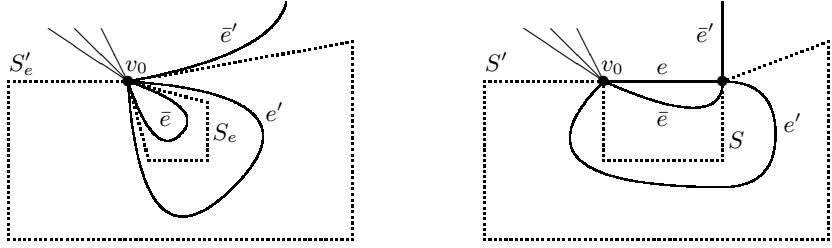
\begin{figure}[htb]
\centering
\myfbox{
\begin{picture}(130,90)(-15,-30)
\small
\put(30,30){\circle*{4}}
\put(29,34){$v_0$} \put(82,15){$e^\prime$}
\put(42,13){$\bar e$} \put(65,45){$\bar e^\prime$}
\put(30,30){\line(-1, 2){10}}
\put(30,30){\line(-1, 1){20}}
\put(30,30){\line(-3, 2){30}}
\qbezier(30,30)(85,40)(90,60)
\put(62,8){$S_e$}
\qbezier(30,30)(60,20)(50,10) \qbezier(30,30)(40,0)(50,10)
\put(-15,34){$S^\prime_e$}
\qbezier(30,30)(35,-45)(70,-10) \qbezier(30,30)(105,25)(70,-10)
\thicklines
\qbezier[15](30,30)(34,15)(38,0)
\qbezier[11](38,0)(49,0)(60,0)
\qbezier[11](60,0)(60,11)(60,22)
\qbezier[15](60,22)(45,26)(30,30)
\qbezier[22](30,30)(7.5,30)(-15,30)
\qbezier[30](-15,30)(-15,0)(-15,-30)
\qbezier[65](-15,-30)(50,-30)(115,-30)
\qbezier[35](115,-30)(115,7.5)(115,45)
\qbezier[42](115,45)(72.5,37.5)(30,30)
\end{picture}
}
\qquad\qquad
\myfbox{
\begin{picture}(130,90)(-15,-30)
\small
\put(30,30){\circle*{4}}\put(75,30){\circle*{4}}
\put(29,34){$v_0$} \put(98,11){$e^\prime$}
\put(50,33){$e$} \put(50,13){$\bar e$} \put(65,45){$\bar e^\prime$}
\put(30,30){\line(-1, 2){10}}
\put(30,30){\line(-1, 1){20}}
\put(30,30){\line(-3, 2){30}}
\put(30,30){\line( 1, 0){45}}
\put(75,30){\line(0,1){30}}
\qbezier(75,30)(75,10)(30,30)
\qbezier(75,30)(95,30)(95,10)\qbezier(95,10)(95,-10)(75,-10)\qbezier(30,30)(-10,-10)(75,-10)
\put(77,5){$S$}
\put(-15,34){$S^\prime$}
\thicklines
\qbezier[15](30,30)(30,15)(30,0)
\qbezier[22](30, 0)(52.5, 0)(75, 0)
\qbezier[15](75,0)(75,15)(75,30)
\qbezier[22](30,30)(7.5,30)(-15,30)
\qbezier[30](-15,30)(-15,0)(-15,-30)
\qbezier[65](-15,-30)(50,-30)(115,-30)
\qbezier[35](115,-30)(115,7.5)(115,45)
\qbezier[20](115,45)(95,37.5)(75,30)
\qbezier[15](75,30)(52.5,30)(30,30)
\end{picture}
}
\caption{$(Tv_0\cup T^\prime v_0)_{e_1\cdots\,e_{k-1}}$ and $D_{e_1\cdots\,e_{k-1}}$}\label{fig:??}
\end{figure}

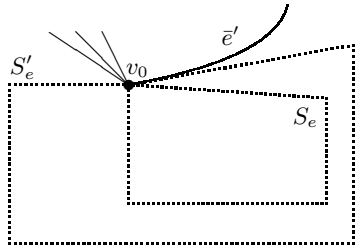
\begin{figure}[htb]
\centering
\myfbox{
\begin{picture}(130,90)(-15,-30)
\small
\put(30,30){\circle*{4}}
\put(29,34){$v_0$}
\put(65,45){$\bar e^\prime$}
\put(30,30){\line(-1, 2){10}}
\put(30,30){\line(-1, 1){20}}
\put(30,30){\line(-3, 2){30}}
\qbezier(30,30)(85,40)(90,60)
\put(92,15){$S_e$}
\thicklines
\qbezier[22](30,30)(30,7.5)(30,-15)
\qbezier[40](30,-15)(67.5,-15)(105,-15)
\qbezier[20](105,-15)(105,5)(105,25)
\qbezier[42](105,25)(67.5,27.5)(30,30)
\put(-15,34){$S^\prime_e$}
\qbezier[22](30,30)(7.5,30)(-15,30)
\qbezier[30](-15,30)(-15,0)(-15,-30)
\qbezier[65](-15,-30)(50,-30)(115,-30)
\qbezier[35](115,-30)(115,7.5)(115,45)
\qbezier[42](115,45)(72.5,37.5)(30,30)
\end{picture}
}
\caption{$D_{e_1\cdots\,e_{k}}$}\label{fig:last in proof}
\end{figure}

Suppose that $D$ is a prime non-alternating minimal crossing link diagram. An innermost or outermost region of the plane bounded by edges of $D$ is called an \emph{alternating region\/} of $D$ if its boundary edges are all alternating and a \emph{non-alternating region\/} of $D$ otherwise. Let $D^\star$ denote the subgraph of the dual graph of $D$ whose edges are the duals of non-alternating edges of $D$. One should notice that among the non-alternating edges, the undercrossing ones and the overcrossing ones appear alternatingly around any non-alternating region. This shows that, in $D^\star$, every vertex is even-valent, every edge belongs to a cycle, and every cycle has an even number of edges. Because $D$ is prime, no bigons exist in $D^\star$.
Now we claim that at least one of the three following cases occur.
\begin{itemize}
\item{} \textbf{\it Case I.} There is an arc of $D$ which crosses over (or under) three times consecutively.
\item{} \textbf{\it Case II.} The two edges of an arc crossing one end of a non-alternating edge are both alternating.
\item{} \textbf{\it Case III.} One of the two edges of an arc crossing one end of a non-alternating edge is alternating and the other is non-alternating.
%\item{} \textbf{\it Case II.} One alternating region $R_0$ and three non-alternating regions $R_1,R_2,R_3$ are located around a vertex $v$ in an order as indexed.
%\item{} \textbf{\it Case III.} Two alternating regions $R_0,R_1$ and two non-alternating regions $R_2,R_3$ are located around a vertex $v$ in an order as indexed.
\end{itemize}

Suppose that an innermost cycle $C$ in $D^\star$ does not contain any alternating region of $D$. Then the part of $D$ inside $C$ is a tree $T$ consisting of edges and half edges. There is a vertex $v$ in $T$ where at least three half edges are incident. The extension of the two nonadjacent half edges is an arc which has three consecutive overcrossings or undercrossings. Suppose that an innermost cycle $C$ in $D^\star$ contains an alternating region, then, because non-alternating regions cannot be isolated, there exists a vertex at which at least one alternating region and at least two non-alternating regions meet. This proves our claim.

\begin{figure}[t]
\centering
\small
\myfbox{
\begin{picture}(200,110)(-100,-60)
{\thicklines\qbezier(-70,40)(0,20)(70,40)\put(-30,23){$\alpha$}}
\put(-98, 2){$v_0$}\put(-91.3,-5.4){\circle*{4}}
\put(-73,45){$v_1$}\put(-60,37.3){\circle*{4}}
\put(-12,37){$v_2$}\put(  0,30  ){\circle*{4}}
\put( 63,45){$v_3$}\put( 60,37.3){\circle*{4}}
\put(-40,-5){$R_1$} \put(30,-5){$R_2$}
\put(-32,37){$g_1$} \put(25,37){$g_2$}
\put(-60,50){\line(0,-1){8}}\qbezier(-60,33)(-60,10)(-100,-10)\put(-80,12){$f_1$}
\qbezier(-100,0)(-80,-10)(-80,-35)\put(-84,-12){$e_1$}
\qbezier[18](-20,-41)(-35,-30)(-55,-40)
\qbezier(-90,-30)(-65,-20)(-50,-40)\put(-65,-25){$e_2$}
\put(  0,45){\line(0,-1){10}} \put(  0,25){\line(0,-1){70}}\put(-13, 0){$f_2$}\put(-4,-60){$D$}
\qbezier(-30,-40)(0,-30)(30,-40)\put(-20,-32){$e_r$}\put(3,-32){$e_{r+1}$}
\qbezier( 20,-41)( 35,-30)( 55,-40)\put(30,-32){$e_{r+2}$}
\qbezier[20]( 87,-30)( 62,-20)( 47,-40)
\qbezier( 100,0)( 80,-10)( 80,-35)\put(72,-17){$e_{s}$}
\put( 60,50){\line(0,-1){8}}\qbezier( 60,33)( 60,10)(100,-10)\put(75,12){$f_3$}
\end{picture}
}
\myfbox{
\begin{picture}(140,110)(-70,-60)
{\thicklines\qbezier(-70,40)(0,20)(70,40)\put(-30,23){$\alpha$}}
\put(  2,-47){$v_0$}\put(0,-36){\circle*{4}}
\put(-73,45){$v_1$}\put(-60,37.3){\circle*{4}}
\put(-12,37){$v_2$}\put(  0,30  ){\circle*{4}}
\put( 63,45){$v_3$}\put( 60,37.3){\circle*{4}}
\put(-30, 5){$R_1$} \put(20, 5){$R_2$}
\put(-32,37){$g_1$} \put(25,37){$g_2$}
\put(-60,50){\line(0,-1){8}}\qbezier(-60,33)(-60,10)( 5,-40)\put(-66,12){$f_1$}
\put(  0,45){\line(0,-1){10}} \put(  0,25){\line(0,-1){70}}\put(-11,-10){$f_2$}
\put(-10,-36){\line(1,0){20}}\put(-8,-40){\line(2,1){16}}\put( 8,-40){\line(-2, 1){16}}
\put( 60,50){\line(0,-1){8}}\qbezier( 60,33)( 60,10)(-5,-40)\put(58,12){$f_3$}\put(-4,-60){$D^\prime$}
\end{picture}
}
\caption{Case I :  $D$ and $D^\prime$}\label{fig:caseI}
\end{figure}

\subsection*{Case I}
Suppose there is an arc $\alpha$ of $D$ consecutively crossing over (resp. under) at the vertices $v_1$, $v_2$ and $v_3$. For $i=1,2$, let $g_i$ be the edge joining $v_i$ and $v_{i+1}$. On one side of this arc there are two adjacent non-alternating regions $R_1$ and $R_2$, whose boundary edges are $f_1,e_1,\ldots,e_r,f_2,\overline{v_1v_2}$ and $f_2,e_{r+1},\ldots,e_s,f_3,\overline{v_2v_3}$, respectively, as shown in Figure~\ref{fig:caseI}. Let $v_0$ be the vertex which is the common endpoint of $f_1$ and $e_1$. By contracting the edges $e_1,\ldots,e_s$ to $v_0$ one after another, we can obtain a new knot-spoke diagram $D_{e_1\cdots\,e_s}$ in which the regions $R_1$, $R_2$ become triangular. If $v_0$ is a cut-point, then find  $i$ such that $e_i$ is the first edge that makes $v_0$ into a cut-point of $D_{e_1\cdots\,e_i}$, then there exists a simple closed curve $S_{e_i}$ meeting $D_{e_1\cdots\,e_i}$ in $v_0$ having the regions $R_1$ and $R_2$ in one side. We apply Proposition~\ref{prop:main tool} to deform the non-spoke edges in the other side of $S_{e_i}$ either by contracting or by making into spokes. We can repeat this process until we obtain a cut-point free knot-spoke diagram $D^\prime$ in which $R_1$ and $R_2$ are triangular  as in Figure~\ref{fig:caseI}. So far we haven't changes the sum of the number of regions and the number of spokes, hence
\begin{equation}
s(D^\prime)+r(D^\prime)=r(D)=c(D)+2.\notag
\end{equation}
Let $D^{\prime\prime}$ be obtained from $D^{\prime}$ by isotoping the arc $\alpha=g_1\cup g_2$ over (resp. under) the vertex $v_0$. This is the same as contracting the edges $f_1,f_2,f_3$ simultaneously and then shrinking the two loops obtained by the overcrossing (resp. undercrossing) arc $\alpha$ to a point above (resp. below) $v_0$. This is possible because $\alpha$ can be placed above (resp. below) any other edges incident to $v_0$. Then $D^{\prime\prime}$ is a knot-spoke diagram satisfying
\begin{equation}\label{eqn:s+r=c}
s(D^{\prime\prime})+r(D^{\prime\prime})=r(D)-2=c(D).\tag{$\dagger$}
\end{equation}
Unless $D^{\prime\prime}$ has a cut-point, we are done in this case.

\begin{figure}[t]
\centering
\small
\myfbox{
\begin{picture}(140,125)(-70,-35)
{\thicklines\qbezier(-70,70)(0,50)(70,70)}
\put(  2,8){$v_0$}
\put(0,-1){\circle*{4}}
\put(-50,65){\circle*{4}}
\put(  0,60  ){\circle*{4}}
\put( 50,65){\circle*{4}}
\put(60,78){$A_1$} \put(-30,73){$A_2$} \put(-55,25){$A_3$} \put(-29,-27){$A_4$} \put(1,-32){$\cdots$} \put(24,-12){$A_n$}
\put(-65,78){$B_1$} \put( 15,73){$B_2$} \put( 50,25){$B_3$} \put( 20,-27){$B_4$} \put(-11,-32){$\cdots$} \put(-35,-12){$B_n$}
\put(-20,30){$R_1$} \put(10,30){$R_2$}
\put(-50,90){\line(0,-1){21}}\qbezier(-50,61)(-50,20)(35,-20)\put(-44,44){$f_1$}
\put(  0,85){\line(0,-1){20}} \put(  0,55){\line(0,-1){90}}\put(3,44){$f_2$}
\put(-40,-1){\line(1,0){80}}\put(0,-1){\line(-1,-2){16}}\put(0,-1){\line( 1,-2){16}}
\put( 50,90){\line(0,-1){21}}\qbezier( 50,61)( 50,20)(-35,-20)\put(36,44){$f_3$}
\end{picture}
}
\caption{Regions around $R_1\cup R_2$ in $D^\prime$}\label{fig:regions}
\end{figure}
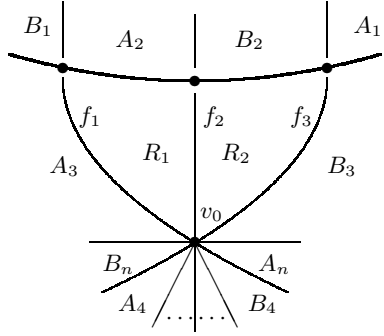

\begin{figure}[b]
\centering
\small
\myfbox{
\begin{picture}(170,110)(-70,-60)
{\thicklines\qbezier(-70,40)(0,20)(70,40)\put(-30,23){$\alpha$}}
\put(-9,-26){$v_0$}\put(0,-36){\circle*{4}}
\put(-73,45){$v_1$}\put(-60,37.3){\circle*{4}}
\put(-12,37){$v_2$}\put(  0,30  ){\circle*{4}}
\put( 63,45){$v_3$}\put( 60,37.3){\circle*{4}}
\put(-33,2){$R_1$} \put(30,-5){$R_2$}
\put(-32,37){$g_1$} \put(25,37){$g_2$}
\put(-60,50){\line(0,-1){8}}\qbezier(-60,33)(-60,10)( 5,-40)\put(-47,10){$f_1$}
%\qbezier(-100,0)(-80,-10)(-80,-35)\put(-84,-12){$e_1$}
%\qbezier[18](-20,-41)(-35,-30)(-55,-40)
%\qbezier(-90,-30)(-65,-20)(-50,-40)\put(-65,-25){$e_2$}
%
\put(  0,45){\line(0,-1){10}} \put(  0,25){\line(0,-1){70}}\put(-13, 0){$f_2$}\put(-10,-60){$D_E$ (or $D_{E^\prime}$)}
\qbezier(-5,-38)(10,-30)(30,-40)\put(3,-32){$e_{r+1}$}
\qbezier( 20,-41)( 35,-30)( 55,-40)\put(30,-32){$e_{r+2}$}
\qbezier[20]( 87,-30)( 62,-20)( 47,-40)
\qbezier( 100,0)( 80,-10)( 80,-35)\put(72,-17){$e_{s}$}
\put( 60,50){\line(0,-1){8}}\qbezier( 60,33)( 60,10)(100,-10)\put(75,12){$f_3$}
\end{picture}
}
\myfbox{
\begin{picture}(170,110)(-70,-60)
{\thicklines\qbezier(-70,40)(-55,-30)(0,-36)\qbezier(0,-36)(30,30)(70,41)}%\put(-51,-20){$\alpha$}
\put(-9,-26){$v_0$}\put(0,-36){\circle*{4}}
%\put(-73,45){$v_1$}\put(-60,37.3){\circle*{4}}
%\put(-12,37){$v_2$}\put(  0,30  ){\circle*{4}}
\put( 63,45){$v_3$}\put( 60,37.3){\circle*{4}}
%
%\put(-33,2){$R_1$}
\put(40,-5){$R_2$} \put(21,17){$g_2$}
\qbezier(-60,33)(-60,10)( 5,-40)%\put(-47,16){$f_1$}
%\qbezier(-100,0)(-80,-10)(-80,-35)\put(-84,-12){$e_1$}
%\qbezier[18](-20,-41)(-35,-30)(-55,-40)
%\qbezier(-90,-30)(-65,-20)(-50,-40)\put(-65,-25){$e_2$}
%
\put(  0,25){\line(0,-1){70}}%\put(-13, 0){$f_2$}
\put(14,-60){$D_1$}
\qbezier(-5,-38)(10,-30)(30,-40)\put(6,-32){$e_{r+1}$}
\qbezier( 20,-41)( 35,-30)( 55,-40)\put(30,-32){$e_{r+2}$}
\qbezier[20]( 87,-30)( 62,-20)( 47,-40)
\qbezier( 100,0)( 80,-10)( 80,-35)\put(72,-17){$e_{s}$}
\put( 60,50){\line(0,-1){8}}\qbezier( 60,33)( 60,10)(100,-10)\put(75,12){$f_3$}
\end{picture}
}
\caption{Case I (2)}\label{fig:caseI(2)}
\end{figure}
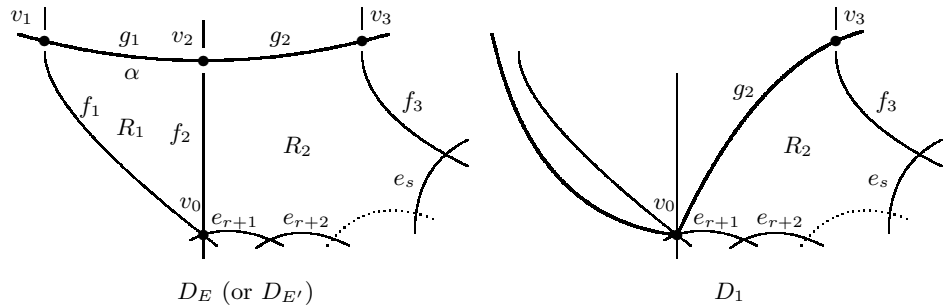

Suppose that $D^{\prime\prime}$ has a cut-point. As the cut-point must be the vertex $v_0$ and the local diagram around $R_1\cup R_2$ in $D^\prime$ is symmetric, we only need to check the cases when two regions $A_i$ and $A_j$, labeled as in Figure~\ref{fig:regions}, indicate the same region, for $i\ne j$. Because $D$ is a prime minimal crossing diagram, we do not need to consider the cases $A_i=A_{i+1}$ for $i=1,\ldots,n-1$, or $A_1=A_n$. For the case $A_1=A_n$, it may happen that $A_1$ has an edge which form a bigon region with $f_3$. This edge is replaced by a spoke when $f_3$ is contracted. Because $D^\prime$ is cut-point free, we have $A_i\ne A_j$ for $3\le i<j\le n$. Therefore the nontrivial cases occur only if $n\ge4$, and they are
\begin{enumerate}
\item $A_1=A_i$ or $B_1=B_i$ for $i=3,\ldots,n-1$,
\item $A_2=A_i$ or $B_2=B_i$ for $i=4,\ldots,n$.
\end{enumerate}
If case (1) occurs only, we can apply Proposition~\ref{prop:main tool} to $D$ outside the region $R_1\cup R_2$ to obtain a cut-point free knot-spoke diagram $D^{\prime\prime}$ satisfying the equation (\ref{eqn:s+r=c}).
If case (2) occurs, there is a simple closed curve $S$ meeting $D$ in three points which are a point on $g_1$, a point on $f_2$, and a vertex of $\partial R_2$ not incident to $f_2$ or $f_3$. Contracting the edges $e_1,\ldots,e_r$ and applying Proposition~\ref{prop:main tool} if necessary, we obtain a sequence $E$ of edges to obtain a cut-point free knot-spoke diagram $D_E$, as illustrated in Figure~\ref{fig:caseI(2)},  in which $R_1$ is triangular and $R_2$ is as in $D$. Isotoping $g_1$ over $v_0$ and replacing any simple loops created by this isotopy with spokes, we obtain a knot-spoke diagram $D_1$ satisfying
\begin{equation}\label{eqn:s+r=c+1}
s(D_1)+r(D_1)=r(D)-1=c(D)+1.\tag{$\ddagger$}
\end{equation}
If $v_0$ is a cut-point of $D_1$, then it must be case (1), and hence we can apply Proposition~\ref{prop:main tool} to obtain another sequence of edges $E^\prime$ so that the regions $R_1$ and $R_2$ in $D_{E^\prime}$ are as in $D_E$. After the same isotopy of $g_1$, we obtain a cut-point free knot-spoke diagram, again denoted by $D_1$, satisfying equation~(\ref{eqn:s+r=c+1}). Contracting the edges $e_{r+1},\ldots,e_s,f_3$, one by one, we obtain a spoke of $(D_1)_{e_{r+1}\cdots\,e_sf_3}$ created by the arc $g_2$ which can be placed above (resp. below) any other edges nearby. This spoke can be shrunk to a point above (resp. below) any other edges at $v_0$, creating a knot-spoke diagram $D^{\prime\prime}$ satisfying equation~(\ref{eqn:s+r=c}). In the case $D^{\prime\prime}$ has a cut point, we can apply Proposition~\ref{prop:main tool} to obtain another sequence $F$ of edges creating an uppermost (resp. lowermost) spoke created by $g_2$ in the cut-point free knot-spoke diagram $D^{\prime\prime}_F$. Shrinking the spoke, we obtain a cut-point free knot-spoke diagram, again denoted by $D^{\prime\prime}$ satisfying equation~(\ref{eqn:s+r=c}).

\begin{figure}[h]
\centering
\small
\myfbox{
\begin{picture}(200,95)(-100,-45)
\qbezier(-70,40.5)(-40,30)(-5,30) \qbezier( 5,30)( 40,30)( 70,40.5) %\put(-38,36){$\alpha$}
\put(-98, 2){$v_0$}\put(-91.3,-5.4){\circle*{4}}
\put(-73,45){$v_1$}\put(-60,37.3){\circle*{4}}
\put(-12,37){$v_2$}\put(  0,30  ){\circle*{4}}
\put( 63,45){$v_3$}\put( 60,37.3){\circle*{4}}
\put(-40,-5){$R_1$} \put(30,-5){$R_2$}
\put(-32,37){$g_1$} \put(25,37){$g_2$}
\put(-60,50){\line(0,-1){8}}\qbezier(-60,33)(-60,10)(-100,-10)\put(-80,12){$f_1$}
\qbezier(-100,0)(-80,-10)(-80,-35)\put(-84,-12){$e_1$}
\qbezier[18](-20,-41)(-35,-30)(-55,-40)
\qbezier(-90,-30)(-65,-20)(-50,-40)\put(-65,-25){$e_{1}$}
{\thicklines\put(  0,45){\line(0,-1){90}}
}\put(-11,-10){$f_2$}
\qbezier(-30,-40)(-20,-35)(-5,-35) \qbezier( 5,-35)( 20,-35)( 30,-40) \put(-20,-32){$e_r$}\put(3,-32){$e_{r+1}$}
\qbezier( 20,-41)( 35,-30)( 55,-40)\put(30,-32){$e_{r+2}$}
\qbezier[20]( 87,-30)( 62,-20)( 47,-40)
\qbezier( 100,0)( 80,-10)( 80,-35)\put(72,-17){$e_{s}$}
\put( 60,50){\line(0,-1){8}}\qbezier( 60,33)( 60,10)(100,-10)\put(75,12){$f_3$}
\end{picture}
}
\myfbox{
\begin{picture}(140,95)(-70,-45)
\qbezier(-70,40.5)(-40,30)(-5,30) \qbezier( 5,30)( 40,30)( 70,40.5) %\put(-38,36){$\alpha$}
\put(  2,-47){$v_0$}\put(0,-36){\circle*{4}}
\put(-73,45){$v_1$}\put(-60,37.3){\circle*{4}}
\put(-12,37){$v_2$}\put(  0,30  ){\circle*{4}}
\put( 63,45){$v_3$}\put( 60,37.3){\circle*{4}}
\put(-30, 5){$R_1$} \put(20, 5){$R_2$}
\put(-32,37){$g_1$} \put(25,37){$g_2$}
\put(-60,50){\line(0,-1){8}}\qbezier(-60,33)(-60,10)( 5,-40)\put(-66,12){$f_1$}
{\thicklines\put(  0,45){\line(0,-1){90}}
}\put(-11,-10){$f_2$}
\put(-10,-36){\line(1,0){20}}\put(-8,-40){\line(2,1){16}}\put( 8,-40){\line(-2, 1){16}}
\put( 60,50){\line(0,-1){8}}\qbezier( 60,33)( 60,10)(-5,-40)\put(58,12){$f_3$}%\put(-4,-60){$D^\prime$}
\end{picture}
}
\caption{Case II}\label{fig:caseII}
\end{figure}

\subsection*{Case II}
As shown in Figure~\ref{fig:caseII}, two non-alternating regions $R_1$, $R_2$ are adjacent along a non-alternating edge $f_2$ and the edges $g_1,g_2$ belonging to $\partial R_1,\partial R_2$, respectively, are incident to the endpoint $v_2$ of $f_2$. We may assume that there are two horizontal planes $P$ and $Q$ such that $P$ separates $f_2$ from the other edges of $\partial R_1$ and $\partial R_2$ and $Q$ separates $f_2$, $g_1$, $g_2$ from the rest. The same edge contractions and isotopies as in \textbf{\it Case I\/} can be done within the respective parts divided by $P$ and $Q$ to obtain a cut-point free knot-spoke diagram $D^{\prime\prime}$ satisfying equation~(\ref{eqn:s+r=c}).

\begin{figure}[t]
\centering\small
\myfbox{
\begin{picture}(140,110)(-70,-60)
\put(-30, 40){\line(1,0){5}} \put(-15, 40){\line(1,0){5}} \put( 10, 40){\line(1,0){5}} \put( 25, 40){\line(1,0){5}}
\put(-20,5){\line(0,1){45}} \put( 20,5){\line(0,1){45}}
\put(-70,0){\line(1,0){5}} \put(-55,0){\line(1,0){110}} \put(65,0){\line(1,0){5}}
\put(-60,-10){\line(0,1){20}} \put( 60,-10){\line(0,1){20}}
\put(-30,-40){\line(1,0){20}} \put( 10,-40){\line(1,0){20}}
\put(-20,-35){\line(0,1){30}} \put( 20,-35){\line(0,1){30}}
\put(-20,-50){\line(0,1){5}} \put( 20,-50){\line(0,1){5}}
\put(60,0){\circle*{4}} \put(62,3){$v_0$}
\put(35,-22){$R_0$}
\put(-5,-22){$R_1$} \put(-5,20){$R_2$}
\put(22,-22){$e_1$} \put(-18,-22){$e_2$}
\put(22,18){$e^\prime_1$} \put(-18,18){$e^\prime_2$}
\put(31,3){$f_0$} \put(-9,3){$f_1$} \put(-49,3){$f_2$}
\qbezier[30](30,-40)(60,-40)(60,-10) \qbezier[10](-10,-40)(0,-40)(10,-40)
\put(-5,-60){(1)}
\end{picture}
}
\qquad
\myfbox{
\begin{picture}(140,110)(-70,-60)
\put(-30, 40){\line(1,0){5}} \put(-15, 40){\line(1,0){5}} \put( 10, 40){\line(1,0){20}}
\put(-20,5){\line(0,1){45}} \put( 20,5){\line(0,1){30}} \put( 20,45){\line(0,1){5}}
\put(-70,0){\line(1,0){5}} \put(-55,0){\line(1,0){110}} \put(65,0){\line(1,0){5}}
\put(-60,-10){\line(0,1){20}} \put( 60,-10){\line(0,1){20}}
\put(-30,-40){\line(1,0){20}} \put( 10,-40){\line(1,0){5}} \put( 25,-40){\line(1,0){5}}
\put(-20,-35){\line(0,1){30}} %\put( 20,-35){\line(0,1){30}}
\put(-20,-50){\line(0,1){5}} \put( 20,-50){\line(0,1){45}}
%\put(35,-22){$R_0$}
\put(-5,-22){$R_1$} \put(-5,20){$R_2$} \put(-9,3){$f_1$}
\put(-5,-60){(2)}
\end{picture}
}
\caption{Case III}\label{fig:caseIII}
\end{figure}
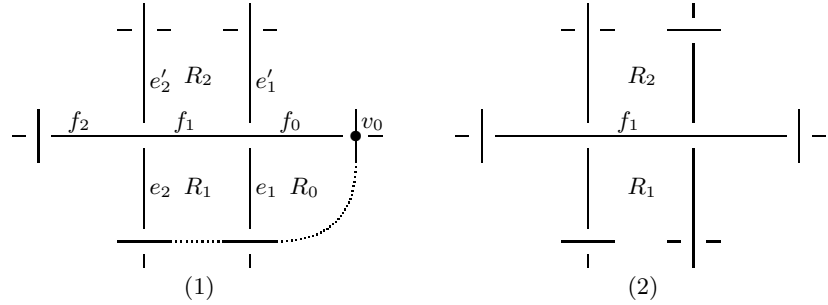

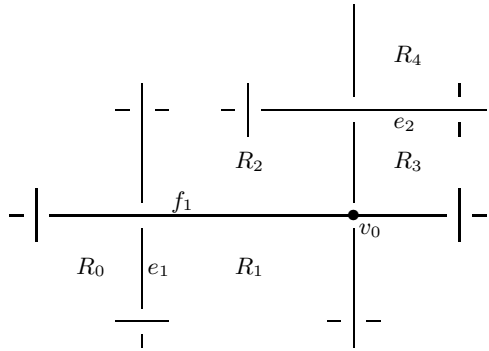
\begin{figure}[h]
\centering\small
\myfbox{
\begin{picture}(180,130)(-70,-50)
\put(-30, 40){\line(1,0){5}} \put(-15, 40){\line(1,0){5}} \put(10, 40){\line(1,0){5}} \put( 25, 40){\line(1,0){85}}
\put(20,30){\line(0,1){20}} \put(100,30){\line(0,1){5}} \put(100,45){\line(0,1){5}}
\put(-20,5){\line(0,1){45}} \put( 60,5){\line(0,1){30}} \put( 60,45){\line(0,1){35}}
\put(-70,0){\line(1,0){5}} \put(-55,0){\line(1,0){150}} \put(105,0){\line(1,0){5}}
\put(-60,-10){\line(0,1){20}} \put( 100,-10){\line(0,1){20}}
\put(-30,-40){\line(1,0){20}} \put( 50,-40){\line(1,0){5}} \put( 65,-40){\line(1,0){5}}
\put(-20,-35){\line(0,1){30}} %\put( 20,-35){\line(0,1){30}}
\put(-20,-50){\line(0,1){5}} \put( 60,-50){\line(0,1){45}}
\put(-45,-22){$R_0$} \put(15,-22){$R_1$} \put(15,18){$R_2$} \put(75,18){$R_3$} \put(75,58){$R_4$}
\put(-18,-22){$e_1$} \put(75,33){$e_2$}  \put(-9,3){$f_1$}
\put(60,0){\circle*{4}}\put(62,-7){$v_0$}
\end{picture}
}
\caption{Case III (2)}\label{fig:caseIII(2)}
\end{figure}

\subsection*{Case III}
If \textbf{\it Case I\/} and  \textbf{\it Case II\/} do not occur in $D$, then, as illustrated in Figure~\ref{fig:caseIII}, there exist only two distinct patterns for the types of the edges incident to a non-alternating edge $f_1$ where two regions $R_1$ and $R_2$ are incident.
In the first pattern, we perform edge contractions to make $R_0$ into a triangular region and then isotope $e_1$ to eliminate the region $R_0$. We do further edge contractions to make $R_1$ into a triangular region and then isotope $e_2$ to eliminate $R_1$. If $v_0$ is the cut-point of the knot-spoke diagram just obtained, then applying Proposition~\ref{prop:main tool} we can do the edge contractions all over again so that the elimination of $R_0$ and $R_1$ is done by isotoping $e_1$ and $e_2$ without making $v_0$ into a cut-point unless there exists a simple closed curve $S$ meeting $D$ at a point of $e_2$, at a point of $e_1$ and at a vertex of $\partial R_0$ away from the edge $e_1$. In this exceptional case we can also apply Proposition~\ref{prop:main tool} so that the first reduction of $s(D)+r(D)$ is done by an isotopy of $e_1$ and the second is done by contracting $e_2$ (or $e^\prime_2$) after it become incident to $v_0$. This contraction makes $f_1$ into a loop which can be kept in a horizontal level in order to be isotoped off. In this way we obtain a cut-point free knot-spoke diagram $D^{\prime\prime}$ satisfying equation~(\ref{eqn:s+r=c}).

By assuming that \textbf{\it Case I\/},  \textbf{\it Case II\/} and \textbf{\it Case III\/} (1) do not occur anywhere in $D$, the second pattern extends as shown in Figure~\ref{fig:caseIII(2)}. We may attempt to eliminate the regions $R_1$ and $R_3$ by isotoping $e_1$ and $e_2$ after some edge contractions. The only nontrivial case that $v_0$ becomes a cut-point occurs when there exists a simple closed curve $S$ meeting $D$ in four points including a point in $e_1$ and a point in $e_2$. In this case $R_0$ and $R_4$ turned out to be the same region and therefore the edge $f_1$ can be pulled off the two crossings and isotoped around $S$ with an extra crossing. This is impossible because $D$ is a minimal crossing diagram. This completes the proof of Theorem~\ref{thm:main}.

\section*{Acknowledgments}
The authors would like to thank Morwen Thistlethwaite for his valuable comments and help during their use of Knotscape and Alexander Stoimenow for his help to eliminate duplications from their list of prime knots with more than 16 crossings.


\begin{thebibliography}{MOT}
\bibitem{BP2000}
            Yongju Bae and Chan-Young Park,
            \emph{An upper bound of arc index of links},
            Math. Proc. Camb. Phil. Soc. \textbf{129} (2000) 491--500.
\bibitem{BG2006}
            J. A. Baldwin and W. D. Gillam,
            \emph{Computations of Heegaard.Floer knot homology},
            arXiv: math/0610167.
\bibitem{B2002}
            Elisabeta Beltrami, \emph{Arc index of non-alternating links},
            J. Knot Theory Ramifications. \textbf{11}(3) (2002) 431--444.
\bibitem{C1995}
            Peter R. Cromwell, \emph{Embedding knots and links in an open book I: Basic properties},
            Topology Appl. \textbf{64} (1995) 37--58.
\bibitem{CN1996}
            Peter R. Cromwell and Ian J. Nutt,
            \emph{Embedding knots and links in an open book II. Bounds on arc index},
            Math. Proc. Camb. Phil. Soc. \textbf{119} (1996), 309--319.
\bibitem{Gong2005}
            Jae Ho Gong, Hyunwoo Kim, Seul Ah Oh, Hyuntae Kim, Gyo Taek Jin, Hun Kim and Gye-Seon Lee,
            \emph{A study on arc index of knots (in Korean)},
            Final reports of KSA R\&E Programs of 2005 [Math.] (2006) 167--215.
\bibitem{HTW1998}
            Jim Hoste, Morwen Thistlethwaite and Jeff Weeks,
            \emph{The first 1,701,936 knots},
            Math. Intelligencer \textbf{20}(4) (1998) 33--48.
\bibitem{Jin2006}
            Gyo Taek Jin, Hun Kim, Gye-Seon Lee, Jae Ho Gong, Hyuntae Kim, Hyunwoo Kim and Seul Ah Oh,
            \emph{Prime knots with arc index up to 10},
            Intelligence of Low Dimensional Topology 2006,
            Series on Knots and Everything Book vol. 40, World Scientific Publishing Co., 65--74, 2006.
\bibitem{Jin2007}
            Gyo Taek Jin and Wang Keun Park,
            \emph{A tabulation of prime knots up to arc index 11},
            submitted to arXiv.
\bibitem{H2006}
            Hiroshi Matsuda,
            \emph{Links in an open book decomposition and in the standard contact structure},
            Proc. Amer. Math. Soc. \textbf{134}(12) (2006) 3697--3702.
\bibitem{MB1998}
            Hugh R. Morton and Elisabetta Beltrami,
            \emph{Arc index and the Kauffman polynomial},
            Math. Proc. Camb. Phil. Soc. \textbf{123}(1) (1998), 41--48.
\bibitem{Ng2006}
            Lenhard Ng, \emph{On arc index and maximal Thurston-Bennequin number},
            arXiv: math/0612356
\bibitem{N1997}
            Ian J. Nutt, \emph{Arc index and Kauffman polynomial},
            J. Knot Theory Ramifications. \textbf{6}(1) (1997) 61--77.
\bibitem{N1999}
            Ian J. Nutt,
            \emph{Embedding knots and links in an open book III. On the braid index of satellite links},
            Math. Proc. Camb. Phil. Soc. \textbf{126} (1999) 77--98.
\bibitem{rolfsen}
            Dale Rolfsen, \emph{Knots and Links}, AMS Chelsea Publishing, 2003
\bibitem{knotplot}
            Knotplot, \texttt{http://knotplot.com/}
\bibitem{knotscape}
            Knotscape, \texttt{http://www.math.utk.edu/$\sim$morwen/knotscape.html}
\bibitem{knotinfo}
            Table of Knot Invariants, \texttt{http://www.indiana.edu/$\sim$knotinfo/}

\end{thebibliography}
\end{document}